\theoremstyle{definition}
\newtheorem{theorem}{Theorem}[section]
\newtheorem{conj}[theorem]{Conjecture}
\newtheorem{lemma}[theorem]{Lemma}
\newtheorem{remark}[theorem]{Remark}
\theoremstyle{definition}
\theoremstyle{remark}
\newtheorem{conv}{Convention}
\def\irr#1{{\rm  Irr}(#1)}
\def\irri#1#2{{\rm Irr}(#1 | #2)}
\def\cd#1{{\rm  cd}(#1)}
\def\Aut#1{{\rm Aut}(#1)}
\def\Out#1{{\rm Out}(#1)}
\newcommand{\SU}{{\rm SU}}
\newcommand{\GU}{{\rm GU}}
\newcommand{\GL}{{\rm GL}}
\newcommand{\SL}{{\rm SL}}
\newcommand{\bG}{\ensuremath{\mathbf{G}}}
\newcommand{\bT}{\ensuremath{\mathbf{T}}}
\newcommand{\PSL}{\mathrm{PSL}}
\newcommand{\wt}[1]{\widetilde{#1}}
\newcommand{\ind}{\mathrm{Ind}}
\newcommand{\res}{\mathrm{Res}}
\title[Automorphisms and characters of groups of type A]{Action of automorphisms on irreducible characters of groups of type \textsf{A}}
\author{Farrokh Shirjian and Ali Iranmanesh}
\address{Farrokh Shirjian: Department of Pure Mathematics, Faculty of Mathematical Sciences, Tarbiat Modares University, Tehran, Iran}
\email{farrokh.shirjian@modares.ac.ir}
\email{fashirjian@gmail.com}
\address{Ali Iranmanesh: Department of Pure Mathematics, Faculty of Mathematical Sciences, Tarbiat Modares University, P.O. Box 14115-137, Tehran, Iran}
\email{iranmanesh@modares.ac.ir}
\thanks{Corresponding Author: Ali Iranmanesh.}
\subjclass[2010]{Primary 20C33, 20C15} \keywords {Finite reductive groups, Automorphisms, McKay conjecture, Generalized Gelfand-Graev representations.}
\date{Aug 2021}
\begin{document}

\maketitle
\begin{abstract}
Let $G$ be a finite group isomorphic to $\SL_n(q)$ or $\SU_n(q)$ for some prime power $q$. In this paper, we give an explicit description of the action of automorphisms of $G$ on the set of its irreducible complex characters. This is done by showing that irreducible constituents of  restrictions of irreducible characters of $\GL_n(q)$ (resp. $\GU_n(q)$) to $\SL_n(q)$ (resp. $\SU_n(q)$) can be distinguished by the rational classes of their unipotent support which are equivariant under the action of automorphisms. Meanwhile, we give a criterion to explicitly determine whether an irreducible character is the constituent of a given generalized Gelfand-Graev character of $G$. As an application, we give a short proof of the global side of Sp{\" a}th's criterion for the inductive McKay condition for the irreducible characters of $G$. 
 \end{abstract}


\section{Introduction}
A finite reductive group is the fixed-point subgroup $G:=\bG^F$ of a connected reductive group $\bG$ defined over the finite field $\mathbb{F}_q$ of characteristic $p > 0$, where $F : \bG \rightarrow \bG$ is the Frobenius map corresponding to this $\mathbb{F}_q$-structure. In recent years, many conjectures in representation theory of finite reductive groups have been ``reduced" to check some new technical conditions about quasi-simple groups of Lie type. These new conditions to check involve analysing the action of automorphisms of a quasi-simple Lie-type group on the set of its irreducible characters.\\

\noindent\textbf{Question.} (\cite[Problem 2.33]{MK}) For $G$ a quasi-simple group of Lie type, determine the action of $\Aut G$ on $\irr G$.\\

For $\epsilon \in \{\pm\}$, we set $\SL^-_n(q)=\SU_n(q)$ and $\SL^+_n(q)=\SL_n(q)$. In this paper, we answer the above question for $G=\SL^\epsilon_n(q)$. Indeed, we generalize an earlier result of Brunat and Himstedt \cite{bru, bh} concerning the action of automorphisms on the set of semisimple and regular characters.

It has been shown by Lusztig that the irreducible characters of $\bG^F$ can be partitioned into the so-called geometric Lusztig series, labelled by the semisimple $\bG^*$-classes of ${\bG^*}^{F^*}$, where $(\bG^* , F^*)$ denotes a pair dual to $(\bG, F)$. If such a series is labelled by a semisimple class with representative $s$, then it contains $|{A_{\bG^*}(s)}^{F^*}|$ semisimple characters, where $A_{\bG^*} (s) = C_{\bG^*} (s)/ C^\circ_{\bG^*} (s)$ is the component group of $s$. The set of semisimple (and regular) characters of $G=\bG^F$ can be naturally parametrized by pairs $(s, \xi)$ where $s$ runs over a set of representatives of the semisimple classes of ${\bG^*}^{F^*}$ and $\xi \in \irr{A_{G^*}(s)}$, where $A_{G^*}(s):={A_{\bG^*}(s)}^{F^*}$. Using the usual Gelfand-Graev characters of $G$, Brunat and Himstedt turned the action of automorphisms on regular characters to the action on the corresponding labels $(s, \xi)$, see \cite{bru, bh}.

In this paper, we use the Kawanaka construction of generalized Gelfand-Graev characters (GGGCs) to separate the irreducible characters in geometric Lusztig series and obtain in this way a labelling system for arbitrary irreducible characters of $\SL^\epsilon_n(q)$ in terms of three parameters (see Theorem \ref{main1} and Section \ref{sec6}). Indeed, it is shown that the various components of restrictions of irreducible characters of $\GL^\epsilon_n(q)$ to $\SL^\epsilon_n(q)$ can be distinguished by the $\SL^\epsilon_n(q)$-classes of their unipotent support. Using this labelling system we then study the action of outer automorphisms on irreducible characters by considering the induced action on the corresponding parameters.

To state our result, we need to introduce some more notation. For any $\lambda \in \irr {C^\circ_{G^*}(s)}$, we denote by $A_{G^*}(s)_\lambda$ the stabilizer of $\lambda$ under $A_{G^*}(s)$. Moreover, the outer automorphism group $\Out G$ is well known to be generated by diagonal, field, and graph automorphisms. In the case of diagonal automorphisms, the action on $\irr G$ is well understood by work of Lusztig \cite{lu88}. In the sequel, we write $F_p$ for a generator of the group of field automorphisms, and write $\gamma$ for a generator of the group of graph automorphisms of $G$.
The main goal of this paper is to show the following.\\

\textbf{Main Theorem (Theorem \ref{main-result}).} Assume that $G=\SL^\epsilon_n(q)$.  For any semisimple element $s \in G^*$ and any unipotent character $\lambda \in \irr {C^\circ_{G^*}(s)}$, there exists a morphism $\omega^0_{s, \lambda}:H^1(F, Z(\bG)) \rightarrow  \irr {A_{G^*}(s)_\lambda}$ such that for the irreducible character $\chi_{s, \lambda, \omega^0_{s, \lambda}(z)} \in \irr G$ parametrized by triple $(s, \lambda, \omega^0_{s, \lambda}(z))$ for some $z \in H^1(F, Z(\bG))$, one has
\[ {}^\sigma \chi_{s, \lambda, \omega^0_{s, \lambda}(z)}=\chi_{{\sigma^*}^{-1}(s), \sigma^*(\lambda), \omega^0_{s, \lambda}(\sigma(z))},  \]
where $\sigma \in \langle F_p, \gamma \rangle$ and $\sigma^* \in \Aut {G^*}$ is its dual automorphism.\\

Consequently, using the Main Theorem, we obtain a short proof of the global side of the so-called inductive McKay condition, cf. \cite[Theorem 2.12]{sp12}, for the irreducible characters of $G=\SL^\epsilon_n(q)$, see also \cite[Theorem 4.1]{CS}. In what follows, let $\wt G=\GL^\epsilon_n(q)$ where $\GL^{-}_n(q)=\GU_n(q)$.\\

\textbf{Corollary (Theorem 7.2).}
If $\wt \chi=\chi_{\wt s, \lambda} \in \irr {\wt G}$, then for the irreducible character $\chi_0=\chi_{s, \lambda, 1} \in \irri {G} {\wt \chi}$ we have
\[ (\wt G \times \langle F_p, \gamma \rangle)_{\chi_0}={\wt G}_{\chi_0} \times (\langle F_p, \gamma \rangle)_{\chi_0}.  \]

\begin{remark}
We should remark that, based on a parametrisation of irreducible characters of $G$ given by Bonnaf{\' e} in \cite{Bo00}, Cabanes and Sp{\" a}th recently established an equivariant Jordan decomposition for the irreducible characters of groups of type \textsf{A} in \cite{CS}. In this paper, we used an $\Aut {\wt G}$-equivariant parametrisation of irreducible characters of $\wt G=\GL^\epsilon_n(q)$ obtained in \cite[Theorem 3.1]{CS13} to show that the various components of restrictions of $\chi_{\wt s, \lambda}$ to $\SL^\epsilon_n(q)$ can be distinguished by the rational classes of their unipotent support, and obtain in this way a parametrisation of $\irr G$. It is not clear whether the two parametrisations of $\irr G$ (used here and in \cite{Bo00}) coinsides. Moreover, our proof of the Main Theorem mainly follows the ideas developed by Brunat and Himstedt in \cite{bru, bh} which is different from the approach used in \cite{CS}. However, the relation of the two results will be discussed in Section \ref{eq-Sec}.
\end{remark}

The rest of the paper is organized as follows. In Section 2, we introduce some basic facts and properties of generalized Gelfand-Graev characters. Section 3 is devoted to the earlier Lusztig and Brunat-Himstedt results on determining the action of automorphisms on the set of regular and semisimple characters. In Section 4, we give an interpretation of Lusztig's result describing the action of diagonal automorphisms on the irreducible characters and then obtain a parametrisation of $\irr {\SL^\epsilon_n(q)}$. Sections 5 and 6 are at the heart of this paper. In section 5, we give a criterion to explicitly determine whether an irreducible character is the constituent of a given GGGC of $\SL^\epsilon_n(q)$ (see Theorem \ref{main1}), which might be of an independent interest. Note that inner automorphisms act trivially on irreducible characters, and the action of diagonal automorphisms was already described in Section 4. Therefore, we describe in Section 6 the action of field and graph automorphisms on the set of irreducible characters of $\SL^\epsilon_n(q)$ (see Theorem \ref{main-result}). Some potential applications of the Main Theorem into other character theoretical problems are discussed in Section 7. Finally, we discuss the relation of our results to \cite{CS} in Section \ref{eq-Sec}.

\subsection{Notation.}
We denote by $\res^G_H \chi$ the restriction of a character $\chi$ of $G$ to some subgroup $H \leq G$. Also the induction of a character $\psi$ of $H$ to $G$ is denoted by $\ind_H^G \psi$. For $N \lhd G$ and $\chi \in \irr G$, we denote by $\irri N \chi$ the set of irreducible constituents of the restriction $\res^G_N \chi$. Finally, the stabilizer of $\psi \in \irr N$ under the action of $G$ on $\irr N$ is denoted by $G_\psi$. Other notation are standard or will be defined where needed.

\section{Generalized Gelfand-Graev Characters}

Let $\bG$ be a connected reductive algebraic group defined over an algebraic closure $\mathbb{K} = \overline{\mathbb{F}_p}$ of the finite field of prime order $p$ and let $F : \bG \to \bG$ be a Frobenius endomorphism defining an $\mathbb{F}_q$-rational structure $G = \bG^F$ on $\bG$. Assuming $p$ is a good prime for $\bG$ a theory of generalised Gelfand--Graev characters (GGGCs) was developed by Kawanaka in \cite{ka}. These are certain characters $\Gamma_u$ of $G$ which are defined for any unipotent element $u \in G$. Note that $\Gamma_u = \Gamma_v$ whenever $u, v \in G$ are $G$-conjugate so the GGGCs are naturally indexed by the unipotent conjugacy classes of $G$.

\subsection{Unipotent Supports and Wave Front Sets}

From this point forward, we assume that $p$ is a good prime for $\bG$. Let $\rho \in \irr G$ be an irreducible character and $\mathcal{O}$ an $F$-stable unipotent conjugacy class of $\bG$. We will denote by $AV(\rho,\mathcal{O})$ the average value $\sum_{g \in \mathcal{O}^F} \rho(g)$ of $\rho$ on the rational points $\mathcal{O}^F$. We say $\mathcal{O}$ is a \emph{unipotent support} of $\rho$ if $\mathcal{O}$ has the maximum dimension for the property that $AV(\rho,\mathcal{O}) \neq 0$. Given a unipotent element $v \in \bG$, we denote by $\mathcal{O}_v$ the $\bG$-conjugacy class containing $v$. With this, we say that $\mathcal{O}$ is a \emph{wave front set} of $\rho$ if:
\begin{enumerate}
	\item $\langle \Gamma_u, \rho \rangle \neq 0$ for some $u \in \mathcal{O}$ and
	\item $\langle \Gamma_v, \rho \rangle \neq 0$ implies $\dim \mathcal{O}_v \leqslant \dim\mathcal{O}$ with $v \in G$ any unipotent element.
\end{enumerate}

Geck \cite[Theorem 1.4]{Ge} and Taylor \cite[Theorems 13.8 and 14.10]{Tay16} have shown that whenever $p$ is good for $\bG$, any irreducible character $\rho$ of $G$ has a unique unipotent support $\mathcal{O}_{\rho}$ and a unique wave front set $\mathcal{O}_{\rho}^*$. These turn out to be dual in the following sense. Let $\rho^* \in \irr G$ be the unique irreducible character such that $\rho^* = \pm D_\bG(\rho)$ where $D_\bG(\rho)$ is the Alvis--Curtis dual of $\rho$. Then $\mathcal{O}_{\rho^*} = \mathcal{O}_{\rho}^*$, see \cite[Lemma 14.15]{Tay16}. In other words, the unipotent support of the Alvis--Curtis dual of $\rho$ is the wave front set of $\rho$.

\subsection{Restrictions of GGGCs}
From this point forward, we assume that $\bG=\SL_n(\mathbb{K})$, $\wt \bG=\GL_n(\mathbb{K})$, $F_q: \wt \bG \rightarrow \wt \bG$ is the standard Frobenius map given by $(a_{ij})\mapsto (a^q_{ij})$ where $q$ is a $p$-power, and $\gamma_0: \wt \bG \rightarrow \wt \bG$ be the automorphism of $\wt \bG$ defined by $(a_{ij})\mapsto {{(a_{ij})}^{tr}}^{-1}$ where ${(a_{ij})}^{tr}$
is the transpose of $(a_{ij})$. We also write $(\bG^*, F^*)$ and $(\wt \bG^*, F^*)$ for pairs dual to $(\bG, F)$ and $(\wt \bG, F)$, respectively. Using the notation $\GL^-_n(q)=\GU_n(q)$ and $\SL^-_n(q)=\SU_n(q)$, we set $\epsilon \in \{\pm\}$ so that $\wt G=\wt \bG^F=\GL^\epsilon_n(q)$ and $G=\bG^F=\SL^\epsilon_n(q)$ where $F={\gamma_0}^{\frac{1-\epsilon 1}{2}}  \circ F_q$.

Note that all primes $p$ are good for $\bG$. Let $i: \bG \hookrightarrow \wt {\bG} $ be a regular embedding which is compatible with $F$. Note that $i$ restricts to an isomorphism on the variety of unipotent elements. For clarity, given a unipotent element $u\in \wt G$, we will denote by $\wt {\Gamma_u}$ (resp. $\Gamma_u$) the corresponding generalized Gelfand-Graev character of $\wt G$ (resp. $G$).  We also denote by $A_\bG(u)$ the component group of the centralizers $C_\bG(u)/C^\circ_\bG(u)$, and by $\mathcal{Z}(\bG)$ the component group $Z(\bG)/Z(\bG)^\circ$. By the construction of the GGGC it is deduced that
\begin{equation}\label{G1}
\wt {\Gamma_u}= \ind_G^{\wt G} {\Gamma_u},
\end{equation}

and all GGGCs of $\wt G$ are obtained in this way, cf. \cite[(2.1)]{Ge93}. The following lemma is a direct consequence of some well known results. However we include a proof for comprehensiveness.

\begin{lemma}\label{restrict-GGGCs}
For any unipotent element $u \in \wt G$, the pairwise distinct GGGCs of $G$ appeared with non-zero multiplicity in $\res^{\wt G}_G ~{\wt {\Gamma_u}}$ are parametrized by $\Gamma_{u_a}$, where $u_a$ denotes a representative of the $G$-class of $u$ indexed by $a \in H^1(F, A_\bG(u))$.

\end{lemma}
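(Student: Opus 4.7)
The plan is to expand $\res^{\wt G}_G \wt{\Gamma_u}$ using the identity $\wt{\Gamma_u} = \ind_G^{\wt G} \Gamma_u$ from (\ref{G1}) and the Mackey formula, and then to reinterpret the resulting sum as a sum of GGGCs indexed by the $G$-conjugacy classes inside the $\wt G$-orbit of $u$.

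First, since $G$ is normal in $\wt G$ (it is the kernel of the determinant restricted to $\wt G$), every $(G,G)$-double coset in $\wt G$ is a single coset and the Mackey formula degenerates to
\[
\res^{\wt G}_G \wt{\Gamma_u} \; = \; \res^{\wt G}_G \ind_G^{\wt G} \Gamma_u \; = \; \sum_{t \in \wt G / G} {}^t\Gamma_u.
\]
Because the Kawanaka construction is intrinsic to $u$ (it is built from a weighted cocharacter and an associated unipotent datum canonically attached to $u$), conjugation by $t \in \wt G$ transports the whole construction to $tut^{-1}$, so ${}^t\Gamma_u = \Gamma_{tut^{-1}}$. Therefore
\[
\res^{\wt G}_G \wt{\Gamma_u} \; = \; \sum_{t \in \wt G / G} \Gamma_{tut^{-1}}.
\]

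Next I would identify the multiset $\{tut^{-1} : t \in \wt G/G\}$. Since $Z(\wt \bG)$ is a connected torus acting trivially by conjugation, the $\wt \bG$-class of $u$ coincides with its $\bG$-class $\mathcal{O}$. Unipotent centralizers in $\wt \bG = \GL_n(\mathbb{K})$ are connected, so $A_{\wt \bG}(u) = 1$, and Lang--Steinberg gives that $\mathcal{O}^F$ is a single $\wt G$-conjugacy class; the same theorem further partitions $\mathcal{O}^F$ into $G$-classes in bijection with $H^1(F, A_\bG(u))$, with chosen representatives $u_a$. Grouping the displayed sum according to the $G$-class of $tut^{-1}$ then yields
\[
\res^{\wt G}_G \wt{\Gamma_u} \; = \; \sum_{a \in H^1(F, A_\bG(u))} m_a \, \Gamma_{u_a},
\]
where each $m_a$ is a positive integer (the number of cosets $tG$ with $tut^{-1}$ in the $G$-class of $u_a$). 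Since GGGCs are canonically indexed by the $G$-conjugacy classes of unipotent elements, the $\Gamma_{u_a}$ are pairwise distinct, which gives the claim.

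The step I would check most carefully is the $\wt G$-equivariance ${}^t\Gamma_u = \Gamma_{tut^{-1}}$ for $t$ in the full group $\wt G$ rather than merely in $G$; this is the only point where the intrinsic definition of the Kawanaka character must be invoked rather than purely formal character-theoretic manipulations. Everything else is the standard Mackey reduction combined with the Lang--Steinberg parametrization of $G$-classes inside an $F$-stable $\bG$-class.
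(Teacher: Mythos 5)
Your proof follows essentially the same route as the paper's: rewrite $\wt{\Gamma_u} = \ind_G^{\wt G}\Gamma_u$, apply the degenerate Mackey formula for the normal subgroup $G \lhd \wt G$, use $\wt G$-equivariance of GGGCs under conjugation (the paper cites \cite[Prop.~2.2]{Ge93} for $ad(g)(\Gamma_u)=\Gamma_{g^{-1}ug}$, where you instead appeal to the intrinsic nature of the Kawanaka construction --- both are fine), and then invoke the Lang--Steinberg parametrization of $G$-classes inside $\mathcal{O}^F_u$ by $H^1(F,A_\bG(u))$.

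One small caution about your last sentence: from the fact that the Kawanaka construction factors through $G$-conjugacy classes (i.e.\ GGGCs are \emph{well-defined} on classes), you cannot conclude that distinct classes produce distinct characters --- that would be injectivity of the map $a \mapsto \Gamma_{u_a}$, not well-definedness. The paper is also silent on this, and the claim is harmless for the use made of the lemma, but as stated your justification of pairwise distinctness is a non-sequitur and should either be dropped or replaced with an actual argument (e.g.\ by looking at the wave-front sets of constituents, or by comparison with the usual Gelfand--Graev characters when $u$ is regular).
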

\begin{proof}
First note that since $C_{\wt {\bG}}(u)$ is connected, then $\mathcal{O}^F_u$ intersects $\wt G$ in a unique $\wt G$-conjugacy class and hence $\wt {\Gamma_v}=\wt {\Gamma_u}$ for any $v \in \mathcal{O}^F_u$. Since $\bG \lhd \wt \bG$, using the Mackey formula we get the following isomorphism of functors
\[ \res^{\wt G}_{G}\circ \ind_{G}^{\wt G} \cong \bigoplus_g ad(g), \]
where $g$ runs over the coset representatives of $\wt G/G$. In particular, we have
\[ \res^{\wt G}_G ~{\wt {\Gamma_u}}= \bigoplus_g \Gamma_{g^{-1}ug},\]

since $ad(g)(\Gamma_u)=\Gamma_{g^{-1}ug}$ by \cite[Proposition 2.2]{Ge93}.  Since $G$-conjugate unipotent elements possess the same GGGC, we deduce that the distinct GGGCs of $G$ appeared in the above decomposition are parametrized by the $G$-conjugacy classes of $u$. Also, it is well-known that the $G$-classes inside $\mathcal{O}^F_u$ are parametrized by the $F$-conjugacy classes of $A_\bG(u)$. Hence we are done.
\end{proof}

When there is no confusion, we may identify $\Gamma_{u_{a}}$ with $\Gamma_{a}$ to simplify the notation.

\begin{remark}\label{remark-gelf}
To illustrate Lemma \ref{restrict-GGGCs} it is useful to consider the special case when $u$ is a regular unipotent element of $\wt G$. It is well known that all regular unipotent elements form a single $\wt G$-conjugacy class and hence a unique GGGC $\Gamma:=\wt {\Gamma_u}$. Then, Lemma \ref{restrict-GGGCs} confirms the well known decomposition $\res^{\wt G}_G~ {\Gamma}=\bigoplus_z \Gamma_{z}$ where $z$ runs over $H^1(F, \mathcal{Z}(\bG))$, and $\Gamma_z$ are the usual Gelfand-Graev characters of $G$. This is because $A_\bG(u) \cong \mathcal{Z}(\bG)$ by \cite[14.24]{dm91}.
\end{remark}


\section{Action of automorphisms on regular and semisimple characters}

In this section, we consider the action of automorphisms on semisimple and regular characters of $G$.

\subsection{Automorphisms of $\bG^F$}
Let $F_p: \wt \bG \rightarrow \wt \bG$ be the Frobenius endomorphism given by
\[  F_p:(a_{ij}) \mapsto (a_{ij}^p).          \]
Note that $F_p$ defines an $\mathbb{F}_p$-structure on $\wt \bG$ and $\bG$, and it is considered as a generator of the group of field automorphisms of $G$. Let $\gamma : \wt \bG \rightarrow \wt \bG$ be an automorphism of $\wt \bG$ and $\bG$ induced by the graph automorphism of the corresponding Dynkin diagram. By \cite[Theorem 2.5.1]{GLS}, we then have
\[  (\wt G: \langle F_p, \gamma \rangle)/Z(\wt G) \cong \Aut G.    \]

A key idea to determine the action is to relate the action of outer automorphisms on irreducible characters to the action of the outer automorphisms on the corresponding GGGCs, which is easily described by the following lemma.

\begin{lemma}\cite[Proposition 11.10]{Tay18}\label{GGGC-action}
For any unipotent element $u \in G$ and any bijective morphism $\sigma: \bG \rightarrow \bG$ which commutes with $F$, we have ${}^\sigma \Gamma_u=\Gamma_{\sigma(u)}$.
\end{lemma}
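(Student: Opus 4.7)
The plan is to trace through Kawanaka's construction of $\Gamma_u$ and check that each ingredient for $u$ is carried, under $\sigma$, to the corresponding ingredient for $\sigma(u)$, so that induction at the end yields ${}^\sigma \Gamma_u = \Gamma_{\sigma(u)}$.

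Recall that, since $p$ is good for $\bG$, to a unipotent element $u \in G$ one associates a cocharacter $\lambda_u : \mathbb{G}_m \to \bG$ under which the image of $u$ via a fixed Springer isomorphism lies in the weight-$2$ piece of the induced grading of $\mathfrak{g} = \mathrm{Lie}(\bG)$; this cocharacter is unique up to $C_\bG(u)^\circ$-conjugacy. From $\lambda_u$ one builds Kawanaka's subgroup $U(\lambda_u, 1.5) \leq \bG$ and a linear character $\psi_u$ of $U(\lambda_u, 1.5)^F$ defined via an invariant bilinear form on $\mathfrak{g}$ and the element $u$. The GGGC is then $\Gamma_u = \mathrm{Ind}_{U(\lambda_u, 1.5)^F}^G \psi_u$, and a theorem of Kawanaka (refined by Premet and Taylor in good characteristic) says this is independent of the choice of $\lambda_u$, so $\Gamma_u$ depends only on the $G$-class of $u$.

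The proof then reduces to three equivariance checks. First, if $\lambda_u$ is associated to $u$ then $\sigma \circ \lambda_u$ is associated to $\sigma(u)$: the derivative $d\sigma$ carries the weight space decomposition for $\lambda_u$ isomorphically onto the decomposition for $\sigma\circ\lambda_u$, and a Springer isomorphism can be chosen $\sigma$-equivariantly because $\sigma$ is a bijective morphism of algebraic groups. Second, $\sigma$ maps $U(\lambda_u, 1.5)$ onto $U(\sigma\circ\lambda_u, 1.5) = U(\lambda_{\sigma(u)}, 1.5)$, and $\psi_u \circ \sigma^{-1} = \psi_{\sigma(u)}$ on this image, since the bilinear form entering Kawanaka's recipe is invariant under the relevant morphisms. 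Third, as $\sigma$ commutes with $F$ it restricts to a bijection of $F$-fixed points, and the standard rule for transporting an induced character under an automorphism gives
\[ {}^\sigma \Gamma_u = \mathrm{Ind}_{\sigma(U(\lambda_u, 1.5))^F}^{G} (\psi_u \circ \sigma^{-1}) = \mathrm{Ind}_{U(\lambda_{\sigma(u)}, 1.5)^F}^{G} \psi_{\sigma(u)} = \Gamma_{\sigma(u)}. \]

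The main obstacle I expect is the equivariance of $\psi_u$ when $\sigma$ involves the graph automorphism $\gamma_0 \colon g \mapsto (g^{tr})^{-1}$ of type $\textsf{A}$: one has to verify that the trace form on $\mathfrak{gl}_n$ is preserved under $d\gamma_0 \colon X \mapsto -X^{tr}$ (it is, by a short computation using $\mathrm{tr}(X^{tr}Y^{tr}) = \mathrm{tr}(XY)$) and that the particular extension of Kawanaka's character from $U(\lambda_u, 2)$ to $U(\lambda_u, 1.5)$ is chosen compatibly. A secondary difficulty is that $\sigma\circ\lambda_u$ and $\lambda_{\sigma(u)}$ need not be literally equal but only $C_\bG(\sigma(u))^\circ$-conjugate, so one must transport by a suitable centralizer element and invoke the independence theorem for $\Gamma_{\sigma(u)}$ to conclude.
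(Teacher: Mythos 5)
The paper gives no proof of this lemma, citing it directly from Taylor [Ta18, Proposition 11.10]; your outline reconstructs Taylor's argument and has the right skeleton: trace the Kawanaka data $(\lambda_u, U(\lambda_u, 1.5), \psi_u)$ through $\sigma$, apply the transport formula for induction, then reconcile $\sigma\circ\lambda_u$ with $\lambda_{\sigma(u)}$ via the independence-of-$\lambda_u$ theorem. You are also right that the last reconciliation is genuinely needed.

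Two assertions are, however, precisely where the real content of the cited proposition lies, and you state them too glibly. First, the claim that a Springer isomorphism ``can be chosen $\sigma$-equivariantly because $\sigma$ is a bijective morphism of algebraic groups'' is not a formality: Springer isomorphisms are not canonical, and the obvious candidate $u \mapsto u - 1$ in type $\mathsf{A}$ already fails to be $\gamma_0$-equivariant, since $\gamma_0(u) - 1 = (u^{tr})^{-1} - 1$ whereas $d\gamma_0(u - 1) = 1 - u^{tr}$, and these differ by the unit $(u^{tr})^{-1}$. Producing a single Springer isomorphism equivariant under every bijective isogeny commuting with $F$ (or the weaker normalisation Taylor actually uses) is a substantive input that must be quoted or proved, not deduced from $\sigma$ being an isomorphism of varieties. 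Second, when $\sigma = F_p$ is a Frobenius the differential $d\sigma$ vanishes on $\mathfrak{g}$, so the equivariance of $\psi_u$ cannot be read off from a bilinear-form computation on the Lie algebra at all; here one must track the additive character $\chi: \mathbb{F}_q \to \mathbb{C}^\times$ built into $\psi_u$ and use that it can be chosen to factor through $\mathrm{Tr}_{\mathbb{F}_q/\mathbb{F}_p}$, so that $\chi \circ F_p = \chi$. You flag the graph case as an ``obstacle'' but do not register the field-automorphism case, where the $d\sigma$-based argument you sketch simply does not apply.
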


\begin{conv}\label{conv1}
Note that any GGGC depends on the actual choice of the corresponding rational unipotent element. Let $u$ be a unipotent element of $\wt G$. An easy computation using the Jordan normal form shows that $\langle F_p, \gamma \rangle$ acts trivially on unipotent classes of $\wt \bG$. So $\wt {\Gamma}_u$ is $\langle F_p, \gamma \rangle$-stable. Let $\mathcal{A}=\{u_a | a \in H^1(F, A_\bG(u)) \}$ denote the set of representatives of $G$-classes in $\mathcal{O}_u^F$.  By \cite[Proposition 4.2]{CS} and Lemma \ref{GGGC-action}, there exists some member of $\mathcal{A}$ whose corresponding GGGC is $\langle F_p, \gamma \rangle$-stable. Without loss of generality, we call this element $u_1$.

\end{conv}

\begin{center}
{\fbox{From this point forward we suppose that Convention \ref{conv1} holds.}}
\end{center}

\subsection{Regular and semisimple characters}

The regular characters, by definition, are the irreducible constituents of usual Gelfand-Graev characters. Indeed, for the unique Gelfand-Graev character $\Gamma$ of $\wt G$ we have
\[  \Gamma=\sum_{\wt s} \chi_{\wt s},         \]
where $\wt s$ runs over the $\wt G^*$-classes of semisimple elements and $\chi_{\wt s}$ denotes the regular character of $\wt G$ corresponding to $\wt G^*$-class of $\wt s$. By \cite[(8.4)]{bo}, for any semisimple element $s \in G^*$, there is a surjective group homomorphism
\[ {\hat{\omega}}_s^0: H^1(F, \mathcal{Z}(\bG)) \rightarrow \irr {A_{G^*}(s)}.  \]

By Remark \ref{remark-gelf}, the usual Gelfand-Graev characters of $G$ are labelled by $\Gamma_z$ where $z \in H^1(F, \mathcal{Z}(\bG))$. Moreover, by \cite[14.49]{dm91}, the multiplicity free characters $\Gamma_z$ and $\res^{\wt G}_G \chi_{\wt s}$ have exactly one common irreducible constituent. We write ${\chi_{s,1}}$ for the common constituent of $\Gamma_1$ and $\res^{\wt G}_G ~\chi_{\wt s}$, and we set $\rho_{s, 1}=\epsilon_\bG \epsilon_{C^\circ_{\bG^*}(s)} D_\bG(\chi_{s, 1})$ where $\epsilon_\bG=(-1)^{{\rm{rk}}_{\mathbb{F}_q}(\bG)}$.

For $z \in H^1(F, Z (\bG))$, we denote by $g_z$ an element of $\bG$ such that $g^{-1}_z F(g_z)$ belongs to $Z(\bG)$ and represents $z$. Then the inner automorphism $\mathrm{int}~g_z$ belongs to $\Aut {\bG, F}$. We denote by $\tau^\bG_z$ its image in $\Out G$.
\begin{lemma}\cite[Proposition 6.3]{bo}\label{tau}
The morphism $\tau^\bG: H^1(F, Z (\bG)) \rightarrow \Out G$; $z \mapsto \tau^\bG_z$ is an injective group homomorphism.
\end{lemma}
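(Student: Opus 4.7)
The plan is to verify three things in turn: (i) well-definedness of $\tau^\bG_z$ on cohomology classes, (ii) multiplicativity, and (iii) triviality of the kernel. Steps (i) and (ii) are direct cocycle manipulations leveraging centrality of $Z(\bG)$, while (iii) requires one genuine input, namely $C_\bG(G) = Z(\bG)$.

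For (i), the automorphism $\mathrm{int}\, g_z$ of $\bG$ commutes with $F$ precisely because $g_z^{-1} F(g_z) \in Z(\bG)$: for any $x \in \bG$ one has $F(g_z x g_z^{-1}) = g_z z F(x) z^{-1} g_z^{-1} = g_z F(x) g_z^{-1}$, so $\mathrm{int}\, g_z$ restricts to an automorphism of $G$. Two choices $g, g'$ producing the same Lang cocycle $z$ satisfy $g g'^{-1} \in \bG^F = G$, so they induce the same class in $\Out G$. If $z_1 = c^{-1} z_2 F(c)$ with $c \in Z(\bG)$, then $g_{z_1} := g_{z_2} c^{-1}$ is a valid representative and $\mathrm{int}(g_{z_2} c^{-1}) = \mathrm{int}\, g_{z_2}$ by centrality of $c$; so the image in $\Out G$ depends only on $z$. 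For (ii), given representatives $g_1, g_2$ of $z_1, z_2$, one checks directly
\[
(g_1 g_2)^{-1} F(g_1 g_2) = g_2^{-1} z_1 F(g_2) = z_1 \cdot g_2^{-1} F(g_2) = z_1 z_2,
\]
where the penultimate equality uses centrality of $z_1$, and then $\mathrm{int}(g_1 g_2) = \mathrm{int}(g_1) \circ \mathrm{int}(g_2)$ yields the homomorphism property modulo $\mathrm{Inn}\, G$.

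The main obstacle is (iii). Suppose $\tau^\bG_z = 1$ in $\Out G$, so $\mathrm{int}\, g_z|_G = \mathrm{int}\, k|_G$ for some $k \in G$; then $h := g_z k^{-1}$ centralizes every element of $G$. I invoke the identity $C_\bG(G) = Z(\bG)$ to conclude $h \in Z(\bG)$. Since $F(k) = k$ and $h$ is central,
\[
z = g_z^{-1} F(g_z) = k^{-1} h^{-1} F(h) F(k) = h^{-1} F(h),
\]
exhibiting $z$ as a coboundary, hence trivial in $H^1(F, Z(\bG))$.

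The only non-routine ingredient is the centralizer identity $C_\bG(G) = Z(\bG)$; in the $\SL_n$ setting of this paper it is immediate from Schur's lemma applied to the natural module (absolutely irreducible under $\SL_n^\epsilon(q)$ for $n \geq 2$, forcing $h$ to be a scalar of determinant $1$), while in Bonnaf\'e's more general framework \cite[Proposition 6.3]{bo} it is derived from the existence of a regular semisimple element in $\bG^F$ combined with Lang-Steinberg.
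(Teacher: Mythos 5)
The paper does not give its own proof of this lemma; it simply cites \cite[Proposition 6.3]{bo}, so there is no in-text argument to compare against. Your proof is the standard argument one finds in Bonnaf\'e, and it is correct: the cocycle computations in steps (i) and (ii) go through as written (in step (i) the representative for a cohomologous cocycle $z_1=c^{-1}z_2F(c)$ should be $g_{z_1}=g_{z_2}c$ rather than $g_{z_2}c^{-1}$, but since $c$ is central this has no effect on $\mathrm{int}\,g_{z_1}$ and the conclusion is unchanged). Step (iii) is also sound once one grants $C_\bG(G)=Z(\bG)$, and your justification via absolute irreducibility of the natural module for $\SL_n^\epsilon(q)$, $n\geq 2$, is the right one in the setting of this paper; one small remark is that you should also note that $h^{-1}F(h)$ lies in $Z(\bG)$ (hence commutes with $k$) before cancelling $k$, which uses that $Z(\bG)$ is $F$-stable as a characteristic subgroup. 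Overall the argument is complete and is essentially the one the cited reference supplies.
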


Note that, by Lemma \ref{tau}, if $g$ and $h$ are two elements of $\bG$ such that $g^{-1}F(g)$ and $h^{-1}F(h)$ represent the same $F$-class of $Z(\bG)$, then ${g}$ and $h$ act identically on $G$ by conjugation. The following compact theorem determines the action of $\Out G$ on the set of regular and semisimple elements.
\begin{theorem}
For any semisimple element $s \in G^*$ and any $\xi \in \irr {A_{G^*}(s)}$, we define
\[ \chi_{s, \xi}={}^{g_z} \chi_{s, 1}~~~ {\rm{and}} ~~~~\rho_{s, \xi}={}^{g_z} \rho_{s,1},      \]
where $z \in H^1(F, \mathcal{Z}(\bG))$ such that ${\hat{\omega}}_s^0(z)=\xi$, and $g_z \in \bG$ such that $g_z^{-1}F(g_z) \in z$ (as explained above). Then

\begin{enumerate}

\item The set
\[ \{\chi_{s, \xi} | s \in G^* ~{\rm{semisimple}}, \xi \in \irr {A_{G^*}(s)} \} \] consists of regular characters of $G$, and the characters $\rho_{s, \xi}$ are the semisimple characters of $G$.

\item For any semisimple element $s \in G^*$, $z \in H^1(F, \mathcal{Z}(\bG))$, and $\sigma \in \langle \gamma, F_p \rangle$ we have
\[ {}^{\sigma} \chi_{s, {\hat{\omega}}_s^0(z)}=\chi_{{\sigma^*}^{-1}(s), {\hat{\omega}}_s^0(\sigma(z))},\]
and
\[ {}^{\sigma} \rho_{s, {\hat{\omega}}_s^0(z)}=\rho_{{\sigma^*}^{-1}(s), {\hat{\omega}}_s^0(\sigma(z))}.\]

\item The character $\chi_{s, \xi}$ is an irreducible constituent of $\Gamma_z$ if and only if ${\hat{\omega}}_s^0(z)=\xi$.

\end{enumerate}
\end{theorem}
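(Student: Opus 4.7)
The plan is to prove (1), (3), and (2) roughly in that order, since (2) will follow formally once (3) is in hand, and (3) is the substantive content. Throughout I rely on Convention \ref{conv1}, on Lemma \ref{GGGC-action}, and on the surjectivity of the morphism $\hat{\omega}_s^0 : H^1(F,\mathcal{Z}(\bG)) \to \irr{A_{G^*}(s)}$ from \cite[(8.4)]{bo}.

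For (1), the starting point is the decomposition $\Gamma = \sum_{\wt s} \chi_{\wt s}$ of the unique Gelfand--Graev character of $\wt G$ into regular characters. Inducing the trivial Gelfand--Graev character $\Gamma_1$ of $G$ recovers $\Gamma$ by \eqref{G1}, and Frobenius reciprocity together with the fact that $\res^{\wt G}_G \chi_{\wt s}$ is multiplicity free gives that $\chi_{s,1}$ is well defined by its description as the unique common constituent of $\Gamma_1$ and $\res^{\wt G}_G\chi_{\wt s}$. The $\wt G/G$-orbit of $\chi_{s,1}$ under the conjugation action $\chi \mapsto {}^{g_z}\chi$ (with $z$ ranging over $H^1(F,\mathcal{Z}(\bG))$) exhausts the constituents of $\res^{\wt G}_G\chi_{\wt s}$, and these are exactly the regular characters lying in the Lusztig series of $s$. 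Since $\hat{\omega}_s^0$ is surjective, letting $\xi$ range over $\irr{A_{G^*}(s)}$ and picking one $z$ in each fibre produces a complete and unambiguous list. Applying Alvis--Curtis duality (with the sign $\epsilon_\bG \epsilon_{C^\circ_{\bG^*}(s)}$ as in the definition of $\rho_{s,1}$) transfers the statement to the semisimple characters, using that $D_\bG$ commutes with conjugation by $g_z$.

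For (3), the key observation is that for any $z \in H^1(F,\mathcal{Z}(\bG))$ we have $\Gamma_z = {}^{g_z}\Gamma_1$; this is Lemma \ref{GGGC-action} applied with $\sigma = \mathrm{int}(g_z)$, together with the identification of $G$-classes of regular unipotent elements with $H^1(F,\mathcal{Z}(\bG))$ recalled in Remark \ref{remark-gelf}. Because $\chi_{\wt s}$ is $\wt G$-invariant, the common-constituent property defining $\chi_{s,1}$ transports under conjugation: ${}^{g_z}\chi_{s,1}$ is the unique common constituent of $\Gamma_z = {}^{g_z}\Gamma_1$ and $\res^{\wt G}_G \chi_{\wt s}$. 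On the other hand, the fibre description of $\hat{\omega}_s^0$ given in \cite[(8.4)]{bo} is precisely that $\chi_{s,\xi}$ appears in $\Gamma_z$ iff $\hat{\omega}_s^0(z) = \xi$; since by definition $\chi_{s, \hat{\omega}_s^0(z)} = {}^{g_z}\chi_{s,1}$ is a constituent of $\Gamma_z$, and since each $\Gamma_z$ is multiplicity free with exactly $|A_{G^*}(s)|$-many constituents of each type after averaging over $z$, a counting argument closes the bijection. The main obstacle is verifying that no collision occurs, i.e.\ that ${}^{g_z}\chi_{s,1} = {}^{g_{z'}}\chi_{s,1}$ forces $\hat{\omega}_s^0(z) = \hat{\omega}_s^0(z')$; this is controlled by the fact that the stabilizer in $\wt G/G Z(\wt G)$ of $\chi_{s,1}$ corresponds, via $\tau^\bG$ of Lemma \ref{tau}, exactly to the kernel of $\hat{\omega}_s^0$.

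For (2), I combine the $\langle F_p,\gamma\rangle$-stability of $\Gamma_1$ (from Convention \ref{conv1}) with the fact that $\sigma \in \langle F_p,\gamma\rangle$ permutes the regular characters of $\wt G$ by $\chi_{\wt s} \mapsto \chi_{{\sigma^*}^{-1}(\wt s)}$. Applying $\sigma$ to the defining equation for $\chi_{s,1}$ shows that ${}^\sigma\chi_{s,1}$ is the unique common constituent of $\Gamma_1$ and $\res^{\wt G}_G\chi_{{\sigma^*}^{-1}(\wt s)}$, hence equals $\chi_{{\sigma^*}^{-1}(s),1}$. Then
\[
 {}^\sigma \chi_{s,\hat{\omega}_s^0(z)} \;=\; {}^\sigma\bigl({}^{g_z}\chi_{s,1}\bigr) \;=\; {}^{\sigma(g_z)}\chi_{{\sigma^*}^{-1}(s),1},
\]
and since $\sigma$ commutes with $F$ and stabilizes $Z(\bG)$, the element $\sigma(g_z)^{-1} F(\sigma(g_z)) = \sigma(g_z^{-1}F(g_z))$ represents $\sigma(z) \in H^1(F,\mathcal{Z}(\bG))$; by Lemma \ref{tau} the conjugation action depends only on this class, so ${}^{\sigma(g_z)}\chi_{{\sigma^*}^{-1}(s),1} = \chi_{{\sigma^*}^{-1}(s),\hat{\omega}_s^0(\sigma(z))}$, yielding the claim for $\chi$. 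The statement for $\rho$ follows by applying Alvis--Curtis duality, which commutes with the action of $\sigma$ and with conjugation, and respects the sign normalisation in the definition of $\rho_{s,\xi}$.
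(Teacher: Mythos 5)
Your argument for part (2) is correct and follows the same route as the paper, which defers to Brunat--Himstedt \cite[Theorem 3.5]{bh} and \cite[Proposition 3.3]{bru}: apply $\sigma$ to the defining equation for $\chi_{s,1}$, use the $\langle F_p,\gamma\rangle$-stability of $\Gamma_1$ from Convention \ref{conv1} and the equivariance of $\wt s\mapsto \chi_{\wt s}$ to identify ${}^\sigma\chi_{s,1}=\chi_{{\sigma^*}^{-1}(s),1}$, then intertwine $\sigma$ with $\mathrm{int}(g_z)$ via the identity ${}^\sigma{}^{g_z}\chi={}^{\sigma(g_z)}{}^\sigma\chi$ and Lemma \ref{tau} to replace $\sigma(g_z)$ by $g_{\sigma(z)}$. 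The transfer of the $\chi$-statement to the $\rho$-statement through Alvis--Curtis duality is also fine. (One small thing worth recording: the multiplicity-one fact you invoke for $\res^{\wt G}_G\chi_{\wt s}\cap\Gamma_z$ is \cite[14.49]{dm91}, which the paper cites explicitly before stating the theorem.)

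For parts (1) and (3) there is a genuine gap, though it is one the paper itself papers over by a bare citation to \cite[Corollaire 15.14]{bo}. The heart of (3) is the equivalence ``$\chi_{s,\xi}$ is a constituent of $\Gamma_z$ iff $\hat\omega_s^0(z)=\xi$.'' Because $\chi_{s,\xi}={}^{g_{z'}}\chi_{s,1}$ for any $z'$ with $\hat\omega_s^0(z')=\xi$, and the unique constituent of $\Gamma_z$ lying over $\chi_{\wt s}$ is ${}^{g_z}\chi_{s,1}$, this equivalence is literally the identity $\mathrm{Stab}_{\wt G/GZ(\wt G)}(\chi_{s,1})=\ker\hat\omega_s^0$ under Lemma \ref{tau}. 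In your sketch you assert this identity (``this is controlled by the fact that the stabilizer \ldots corresponds \ldots exactly to the kernel'') but never prove it; and the ``counting argument'' you gesture at only shows the two subgroups have equal index $|A_{G^*}(s)|$ in $H^1(F,\mathcal Z(\bG))$, not that they coincide as subgroups. That coincidence is the substantive content of Bonnaf\'e's 15.14 and needs either a genuine proof (which requires unwinding the explicit construction of $\hat\omega_s^0$ in \cite[\S 8]{bo}) or an honest citation. As written, your proof of (3) is circular: it takes for granted the stabilizer--kernel identity, which is exactly what (3) asserts. The same caveat applies to the well-definedness of the notation $\chi_{s,\xi}$ in part (1): to check it is independent of the choice of $z$ in the fibre $\hat\omega_s^0{}^{-1}(\xi)$ one already needs the inclusion $\ker\hat\omega_s^0\subseteq\mathrm{Stab}(\chi_{s,1})$, and you do not prove this either. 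You should either supply the missing identification or, as the paper does, simply cite \cite[Corollaire 15.14]{bo} for parts (1) and (3).
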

\begin{proof}
Parts 1 and 3 are Corollaire 15.14 of \cite{bo}. The second part follows using the similar argument as in \cite[Theorem 3.5]{bh} and \cite[Proposition 3.3]{bru}.
\end{proof}



\section{Action of diagonal automorphisms}\label{sec4}
In this section, we give a reinterpretation of the Lusztig result \cite[5.1]{lu88} on the action of diagonal automorphisms on arbitrary irreducible characters in a unified way that is consistent with those results of the previous section (see also \cite[15.14]{CE}). In the sequel, we will use the Jordan decomposition of characters introduced in \cite[Theorem 3.1]{CS13} to parametrize the irreducible characters of $\wt G$. In particular, the parameters of the irreducible characters of $\wt G$ are chosen so that they are equivariant with respect to the action of automorphisms. 

For $\wt s \in \wt G^*$, let $s=\wt s Z(\wt \bG^*)$, $A_{\bG^*}(s)$ the component group of centralizer of $s$ in $\bG^*$, and $A_{G^*}(s)=(A_{\bG^*}(s))^{F^*}$. By \cite[Theorem 3.1]{CS13}, the irreducible characters of $\wt G$ are parametrized $(\wt s, \lambda) \mapsto \chi_{\wt s, \lambda}$ where $\wt s \in  \wt G^*$ is a semisimple element and $\lambda \in \mathcal{E}(C_{\wt G^*}(\wt s),1)=\mathcal{E}(C^\circ_{G^*}(s),1)$. Then the restriction of $\chi_{\wt s, \lambda}$ to $G$ has $|A_{G^*}(s)_\lambda|$ irreducible constituents where $A_{G^*}(s)_\lambda$ denotes the stabilizer of $\lambda$ under $A_{G^*}(s)$. Note that by \cite[11.6]{bo}, the labels of irreducible characters of $\wt G$ can be chosen such that $\res^{\wt G}_{G} ~\chi_{\wt s, \lambda}=\res^{\wt G}_{G} ~\chi_{z\wt s, \lambda}$ for any $z \in ({\rm{Ker}}~ i^*)^{F^*}$, thus the labels for characters as above are well defined.

Here we give a parametrization of the action of diagonal automorphisms on constituents of the restriction to $G$ of $\chi_{\wt s, \lambda}$. Set

\[ \res^{\wt G}_{G} ~\chi_{\wt s, \lambda} = \sum_{\xi \in \irr {{A_{G^*}(s)}_{\lambda}}}  \chi_{s, \lambda, \xi}.  \]

For any $\lambda \in \mathcal{E}(C_{\wt G^*}(s),1)$, the dual of the natural inclusion map $i^*_\lambda: A_{G^*}(s)_\lambda \rightarrow A_{G^*}(s)$ leads to a surjective homomorphism
\[  \hat{i}^*_\lambda: \irr {A_{G^*}(s)} \rightarrow \irr {A_{G^*}(s)_\lambda}.                          \]

Let $z \in H^1(F, \mathcal{Z}(\bG))$ and $\tau^{\bG}_z \in \Out G$ be the corresponding inner automorphism which acts on $G$ by conjugation by $g_z$, see Lemma \ref{tau}. Note that, as a diagonal automorphism, $\tau^{\bG}_z$ permutes the irreducible characters $\chi_{s, \lambda, \xi}$ where $\xi$ runs over $\irr {A_{G^*}(s)_\lambda}$. Thus we have

\[  {}^{g_z} \chi_{s, \lambda, \xi}= \chi_{s, \lambda, \xi {\hat{\omega}}_{s, \lambda}^0(z)},  \tag{4.1.1} \label{Eq3.1.1}\]

where ${\hat{\omega}}_{s, \lambda}^0:= \hat{i}^*_\lambda \circ {\hat{\omega}}_{s}^0:H^1(F, \mathcal{Z}(\bG)) \rightarrow  \irr {A_{G^*}(s)_\lambda}$ is a surjective homomorphism.

\section{Distribution of irreducible characters as constituents of GGGCs}\label{sec5}
Any irreducible character $\chi \in \irr {\SL_n(q)}$ occur as an irreducible constituent of restriction to $G$ of some $\wt \chi=\rho_{\wt s, \lambda} \in \irr {\wt G}$. As explained in Section 2, one can associate to $\wt \chi$ a wave front set $\mathcal{O}^*_{\wt \chi}$ , and for any rational unipotent element $u \in {\mathcal{O}^*_{\wt \chi}}^F$, there is a corresponding GGGC $\wt {\Gamma_u}$. In general, it is known that the multiplicity of $\wt \chi$ in $\wt {\Gamma_u}$ is small. In case $\wt G=\GL^\epsilon_n(q)$, the multiplicity has been proved to be exactly one.

\begin{lemma}\cite[Corollary 15.7]{Tay16}\label{GGGC-GL-1}
For any $\rho \in \irr {\wt G}$ and any unipotent element $u \in \wt G$, we have

\begin{equation*}
\langle \wt {\Gamma_u}, \rho \rangle_{\wt G} =
\begin{cases}
\displaystyle{1} & \text{if~} u \in \mathcal{O}^*_\rho \\ \vspace{.2cm}
\displaystyle{0} & \text{if~} \mathcal{O}_u \nsubseteq \overline{\mathcal{O}^*_\rho},
\end{cases}
\end{equation*}
where $\overline{\mathcal{O}^*_\rho}$ is the Zariski closure of ${\mathcal{O}^*_\rho}$.
\end{lemma}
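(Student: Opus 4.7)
The plan is to reduce the computation, via Jordan decomposition of characters, to a multiplicity question for unipotent characters of a connected centraliser, exploiting that $\wt\bG = \GL_n(\mathbb{K})$ has connected centre. Consequently every centraliser $C_{\wt\bG^*}(\wt s)$ of a semisimple element $\wt s$ is connected, being a product of general linear groups over field extensions, so every component group $A_{\wt\bG^*}(\wt s)$ or $A_{\wt\bG}(u)$ that would normally obstruct a multiplicity-one statement is trivial here.

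For the vanishing in the second case, I would first dispose of $\dim \mathcal{O}_u > \dim \mathcal{O}^*_\rho$ directly from the defining property of the wave front set recalled in Section~2. The remaining situation $\dim \mathcal{O}_u \leq \dim \mathcal{O}^*_\rho$ with $\mathcal{O}_u \not\subseteq \overline{\mathcal{O}^*_\rho}$ is the more delicate one. Here I would invoke the Lusztig-type expansion of $\wt{\Gamma_u}$ as a $\mathbb{Q}$-linear combination of irreducible characters whose wave front set closures all contain $\mathcal{O}_u$, an ingredient already present in the proof of \cite[Theorem 14.10]{Tay16}. Taking the inner product with $\rho$ then forces $\mathcal{O}_u \subseteq \overline{\mathcal{O}^*_\rho}$; contrapositively, $\mathcal{O}_u \not\subseteq \overline{\mathcal{O}^*_\rho}$ yields $\langle \wt{\Gamma_u}, \rho \rangle = 0$.

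For the multiplicity-one statement, I would use the Jordan decomposition $\rho = \chi_{\wt s, \lambda}$ from \cite[Theorem 3.1]{CS13} together with the standard compatibility of GGGCs with Lusztig induction. This reduces the computation of $\langle \wt{\Gamma_u}, \chi_{\wt s, \lambda} \rangle$ to the multiplicity of the unipotent character $\lambda$ in a GGGC at the wave front set of $\lambda$ inside the connected type-A group $C_{\wt G^*}(\wt s)$. Since $C_{\wt G^*}(\wt s)$ is a product of general linear groups over field extensions, the inner product can be evaluated via the Springer correspondence and Green's symmetric-function description of $\GL$-characters, and yields exactly $1$: the partition labelling the unipotent character $\lambda$ uniquely matches that labelling $\mathcal{O}_u$.

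The main obstacle I anticipate is the multiplicity-one half. The vanishing is essentially bookkeeping within the Lusztig--Geck--Taylor unipotent-support machinery and proceeds in fair generality. Multiplicity \emph{exactly} $1$, however, rests on a precise non-cancellation argument in the almost-character expansion of $\wt{\Gamma_u}$; outside type A, such multiplicities can be strictly greater than $1$ and are governed by irreducible characters of $A_{\wt\bG}(u)$ together with $A_{\wt\bG^*}(\wt s)$. The triviality of these component groups in the $\GL_n^\epsilon$ setting is precisely what collapses the sum to a single non-vanishing term.
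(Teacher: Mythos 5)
The paper does not actually prove this statement---it cites \cite[Corollary 15.7]{Tay16} as a black box, so there is no in-paper argument to compare against and your sketch must be judged on its own. You correctly identify the structural input that makes the lemma work in type $\mathsf{A}$: the triviality of $A_{\wt\bG}(u)$ and $A_{\wt\bG^*}(\wt s)$ is indeed what collapses the multiplicity to exactly one. Your treatment of the vanishing case is also reasonable: the dimension inequality is immediate from the defining property of the wave front set recalled in Section~2, and the non-containment case does follow from the closure-support property of the expansion of $\wt{\Gamma_u}$ established in \cite[Theorem 14.10]{Tay16}, although you should be aware that you are leaning directly on the heavy machinery there rather than on mere bookkeeping.

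The multiplicity-one half has a real gap. You invoke a ``standard compatibility of GGGCs with Lusztig induction'' to transport the inner product from $\wt G$ to $C_{\wt G^*}(\wt s)$, but no such compatibility is available off the shelf: a Jordan decomposition for generalized Gelfand--Graev characters is roughly as deep as the statement being proved, and it is not how Taylor's cited result is obtained. Taylor's argument expands $\wt{\Gamma_u}$ in terms of characteristic functions of character sheaves and then exploits that for $\GL_n^\epsilon$ the nonabelian Fourier transform is the identity (every Lusztig family is a singleton), so the almost-character expansion already sits on the irreducible-character basis with the stated coefficients; this is also precisely where the triviality of the component groups enters. Your own closing diagnosis---that the crux is ``a precise non-cancellation argument in the almost-character expansion''---points at that route and not at the Jordan-decomposition/Green-polynomial route you set out to follow, so the two halves of your sketch pull in different directions. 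Finally, the assertion that ``the partition labelling $\lambda$ uniquely matches that labelling $\mathcal{O}_u$'' and that Green's functions then give exactly one is the computation to be carried out, not a justification; as written it restates the conclusion. A self-contained alternative to Taylor's route does exist (Kawanaka's original explicit computation for $\GL_n$ with Green polynomials and Kostka numbers handles arbitrary, not only unipotent, characters directly, with no reduction step), but that is a different argument from the one you propose.
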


Thus we get the following consequence.

\begin{lemma}\label{GGGC-GL-2}
Any $\rho \in \irr {\wt G}$ is a constituent of the GGGC $\wt {\Gamma}_{{\mathcal{O}^*_\rho}^F}$.
\end{lemma}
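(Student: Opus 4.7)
The plan is to read the conclusion off directly from Lemma \ref{GGGC-GL-1}. First I would check that the notation $\wt{\Gamma}_{{\mathcal{O}^*_\rho}^F}$ makes sense, i.e.\ that ${\mathcal{O}^*_\rho}^F$ is non-empty and determines a single GGGC of $\wt G$ (so that $\wt \Gamma_u$ is the same for any $u$ in this set). The wave front set $\mathcal{O}^*_\rho$ is, by construction, an $F$-stable unipotent conjugacy class of $\wt\bG = \GL_n(\mathbb{K})$; since this ambient group is connected reductive, the Lang--Steinberg theorem applied to $\mathcal{O}^*_\rho$ gives $\mathcal{O}^{*F}_\rho \neq \varnothing$. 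Moreover, centralizers of unipotent elements in $\GL_n(\mathbb{K})$ are connected, so $\mathcal{O}^{*F}_\rho$ consists of a single $\wt G$-conjugacy class, and hence the corresponding GGGC $\wt{\Gamma}_u$ for $u \in \mathcal{O}^{*F}_\rho$ is well-defined (as already noted at the start of Section 2.2 and used in the proof of Lemma \ref{restrict-GGGCs}).

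Second, I would pick any $u \in {\mathcal{O}^*_\rho}^F$. Then $u \in \mathcal{O}^*_\rho$, so the first case of Lemma \ref{GGGC-GL-1} gives
\[
\langle \wt \Gamma_u,\, \rho\rangle_{\wt G} \;=\; 1 \;\neq\; 0,
\]
which says precisely that $\rho$ is a constituent (in fact with multiplicity one) of $\wt \Gamma_{{\mathcal{O}^*_\rho}^F}$. There is no serious obstacle to this argument: the only thing to verify beyond quoting the previous lemma is non-emptiness and $\wt G$-irreducibility of ${\mathcal{O}^*_\rho}^F$, both of which are standard facts specific to $\wt\bG = \GL_n(\mathbb{K})$ (all unipotent centralizers connected) combined with Lang--Steinberg.
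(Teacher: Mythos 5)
Your proof is correct and takes essentially the same route as the paper: apply Lemma~\ref{GGGC-GL-1} to $u \in {\mathcal{O}^*_\rho}^F$ and note that connectedness of unipotent centralizers in $\GL_n(\mathbb{K})$ makes ${\mathcal{O}^*_\rho}^F$ a single $\wt G$-class, so the GGGC is well-defined. Your additional remark on non-emptiness via Lang--Steinberg is a small tidying-up that the paper leaves implicit.
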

\begin{proof}
By Lemma \ref{GGGC-GL-1}, $\rho$ is a constituent of GGGCs corresponding to $\wt G$-classes inside ${\mathcal{O}^*_\rho}^F$. Since $C_{\wt \bG}(u)$ is connected for any $u \in {\mathcal{O}^*_\rho}$, we conclude that ${\mathcal{O}^*_\rho}^F$ is a single $\wt G$-conjugacy class.
\end{proof}

Recall that we are assuming Convention \ref{conv1} holds.

\begin{lemma}\label{GGGC-SL}
For any $\rho \in \irr {\wt G}$ and any unipotent element $u \in G \cap {\mathcal{O}^*_\rho}^F$, we have
\[ \langle {\Gamma_u}, \res^{\wt G}_G ~\rho \rangle_{G}=1.\]
\end{lemma}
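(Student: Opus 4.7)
The plan is to reduce the claim to the corresponding statement for $\wt G$, which is exactly Lemma \ref{GGGC-GL-1}. The bridge between the two groups is equation \eqref{G1}, which asserts $\wt{\Gamma_u}=\ind_G^{\wt G}\Gamma_u$ for any unipotent $u\in G$, so the computation should be a one-line application of Frobenius reciprocity.

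More precisely, first I would observe that the hypothesis $u\in G\cap {\mathcal{O}^*_\rho}^F$ already places $u$ in the setting of both formulas: $u$ is a rational unipotent element of $G$ (being $F$-fixed and unipotent), so the GGGC $\Gamma_u$ of $G$ is defined and \eqref{G1} applies; moreover, $u\in{\mathcal{O}^*_\rho}^F\subset\wt G$, so $u$ lies in the wave front set of $\rho$ and Lemma \ref{GGGC-GL-1} gives $\langle\wt{\Gamma_u},\rho\rangle_{\wt G}=1$.

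Then by Frobenius reciprocity together with \eqref{G1},
\[
\langle \Gamma_u,\res^{\wt G}_G\rho\rangle_{G}=\langle \ind_G^{\wt G}\Gamma_u,\rho\rangle_{\wt G}=\langle \wt{\Gamma_u},\rho\rangle_{\wt G}=1,
\]
which is exactly the assertion. There is no serious obstacle here: the only non-formal inputs are \eqref{G1} (already recorded from \cite{Ge93}) and Lemma \ref{GGGC-GL-1}; the role of the hypothesis $u\in G$ (as opposed to merely $u\in \wt G$) is to ensure that $\Gamma_u$ on the left-hand side makes sense as a GGGC of $G$ and that the induction in \eqref{G1} is literally from $G$ to $\wt G$. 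If one wanted to be completely explicit about compatibility, one could invoke the fact that $i\colon \bG\hookrightarrow\wt\bG$ restricts to an isomorphism on unipotent varieties, so $\mathcal{O}^*_\rho$ meets $G$ and the intersection $G\cap{\mathcal{O}^*_\rho}^F$ is indeed non-empty and well-behaved with respect to \eqref{G1}.
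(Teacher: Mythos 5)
Your proof is correct and is essentially identical to the paper's: both apply Frobenius reciprocity together with equation \eqref{G1} to transfer the pairing to $\wt G$, and then invoke Lemma \ref{GGGC-GL-1}. The additional remarks on why the hypotheses make $\Gamma_u$ well defined are sound but not needed beyond what the paper already records.
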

\begin{proof}
Using Lemma \ref{GGGC-GL-1} and Frobenius reciprocity we have
\begin{align*}
\langle {\Gamma_u}, \res^{\wt G}_G ~\rho \rangle_{G}&=\langle \ind_G^{\wt G}~{\Gamma_u}, \rho \rangle_{\wt G}=\langle {\wt {\Gamma_u}}, \rho \rangle_{\wt G}=1.
\end{align*}
\end{proof}

Let $\wt \chi=\chi_{\wt s, \lambda} \in \irr {\wt G}$. By Lemmas \ref{GGGC-GL-1} and \ref{GGGC-GL-2}, $\wt \chi$ is a constituent of $\wt {\Gamma_u}$ where $u$ is a representative of the unique $\wt G$-conjugacy class inside ${\mathcal{O}^*_{\wt \chi}}^F$. Restricting to $G$ and using Lemmas \ref{G1} and \ref{GGGC-SL}, we conclude that the multiplicity free characters $\Gamma_{u_a}$, with $a \in H^1(F, A_\bG(u))$, and $\res^{\wt G}_G~ {\chi_{\wt s, \lambda}}$ have a unique common irreducible constituent. We write ${\chi_{s, \lambda, 1}}$ for the common constituent of $\Gamma_{u_1}$ and $\res^{\wt G}_G~ \chi_{\wt s, \lambda}$. Therefore, if $u_1 \in {\mathcal{O}^*_{\chi_{\wt s, \lambda}}}$, then $\langle \Gamma_{u_1},  {\chi_{s, \lambda, 1}}\rangle=1$.

\subsection{Action of automorphisms and GGGCs}
In order to relate the action of outer automorphisms on arbitrary irreducible characters of $G$ to their action on the GGGCs, we first need to determine the distribution of the irreducible characters as constituents of some suitable GGGC. This is the main aim of this section.

\begin{lemma}\label{phiu}
For $u=u_1$ there exists a surjective homomorphism $\varphi_u:H^1(F, Z(\bG)) \rightarrow H^1(F, A_\bG(u))$ such that
\[ {}^{g_z} \Gamma_{a}=\Gamma_{{\varphi_u(z)a}},\]
for any representative $g_z \in \bG$ of corresponds to $z \in H^1(F, \mathcal{Z}(\bG))$. 
\end{lemma}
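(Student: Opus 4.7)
The plan is to apply Lemma \ref{GGGC-action} to the bijective morphism $\mathrm{int}\, g_z$. Since $z_0 := g_z^{-1} F(g_z) \in Z(\bG)$ is central, we have $F(g_z) = g_z z_0$ and hence $\mathrm{int}\, g_z$ commutes with $F$; Lemma \ref{GGGC-action} then gives
\[ {}^{g_z} \Gamma_{u_a} = \Gamma_{g_z u_a g_z^{-1}}. \]
The task is thereby reduced to identifying, in terms of $z$ and $a$, the parameter in $H^1(F, A_\bG(u))$ of the $G$-class of $g_z u_a g_z^{-1} \in \mathcal{O}_u^F$.

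To define $\varphi_u$ I would use the $F$-equivariant composition
\[ \pi \colon Z(\bG) \hookrightarrow C_\bG(u) \twoheadrightarrow A_\bG(u). \]
For $\bG=\SL_n$ the component group $A_\bG(u)$ is a cyclic quotient of $Z(\bG)$ realised precisely by $\pi$, so $\pi$ is surjective. Because $H^1(F, -)$ on a finite abelian $F$-module is the coinvariants functor $M \mapsto M/(F-1)M$, the induced map
\[ \varphi_u \colon H^1(F, Z(\bG)) \longrightarrow H^1(F, A_\bG(u)) \]
is a surjective group homomorphism.

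For the cocycle computation, I would write $u_a = x_a u x_a^{-1}$ with $x_a \in \bG$ chosen so that $c_a := x_a^{-1} F(x_a) \in C_\bG(u)$ has image $\bar c_a \in A_\bG(u)$ representing $a$ (this is possible since $u=u_1$ is $F$-fixed). Setting $y := g_z x_a$, one has $g_z u_a g_z^{-1} = y u y^{-1}$, and using the centrality of $z_0$,
\[ y^{-1} F(y) = x_a^{-1} g_z^{-1} F(g_z) F(x_a) = x_a^{-1} z_0 F(x_a) = z_0 \, c_a. \]
Its image in $A_\bG(u)$ is $\pi(z_0)\, \bar c_a$, whose $F$-coinvariant class is exactly $\varphi_u(z)\, a$. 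Hence the $G$-class of $g_z u_a g_z^{-1}$ is the one labelled by $\varphi_u(z)\, a$, which gives the stated identity.

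The main obstacle, beyond carefully matching the conventions used in the parametrisation of $\mathcal{O}_u^F/G$ by $H^1(F,A_\bG(u))$, is the surjectivity of $\pi$. This is a classical structural fact about centralizers of unipotent elements in $\SL_n$: each $A_\bG(u)$ is a cyclic group, obtained as the image of $Z(\bG)$ through $\pi$ (for instance, if $u$ has Jordan type $(\lambda_1,\dots,\lambda_k)$ and $d=\gcd(\lambda_i)$, then $A_\bG(u)$ is cyclic of order $d$ with $\pi$ induced by the $(n/d)$-power map on $Z(\bG)$).
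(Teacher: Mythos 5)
Your proof is correct and takes essentially the same route as the paper's: define $\varphi_u$ by pushing the composite $Z(\bG)\to C_\bG(u)\to A_\bG(u)$ through $F$-coinvariants, then compute the $G$-class of the conjugated unipotent element via the cocycle $y^{-1}F(y)$. The one substantive difference is in how surjectivity of $\pi\colon Z(\bG)\to A_\bG(u)$ is justified. You invoke the explicit classification of component groups of unipotent classes in $\SL_n$ (that $A_\bG(u)$ is cyclic of order $d=\gcd(\lambda_i)$ and is realised as a quotient of $Z(\bG)$). The paper instead derives it structurally from the regular embedding $\bG\hookrightarrow\wt\bG=\GL_n$: the exact sequence $Z(\bG)\to A_\bG(u)\to A_{\wt\bG}(\wt u)\to 1$ combined with connectedness of $C_{\wt\bG}(\wt u)$ (so $A_{\wt\bG}(\wt u)=1$) gives surjectivity without knowing the isomorphism type of $A_\bG(u)$. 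Your argument is more concrete and self-contained for $\SL_n$; the paper's is slightly more conceptual and would transfer unchanged to any situation where the ambient regular embedding has connected centralisers. Minor remarks: your $\mathrm{int}\,g_z$ convention ($x\mapsto g_z x g_z^{-1}$) is the opposite of the one the paper uses when it writes ${}^{g_z}\Gamma_u=\Gamma_{g_z^{-1}u g_z}$, but both are internally consistent; and your identification of $H^1(F,-)$ with coinvariants implicitly uses that $A_\bG(u)$ is abelian, which does hold here but is worth flagging since the paper does not spell it out either.
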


\begin{proof}
Recall that for $z \in H^1(F, Z (\bG))$, we denote by $g_z$ an element of $\bG$ such that $g^{-1}_z F(g_z)$ belongs to $Z(\bG)$ and represents $z$. Moreover, we have ${}^{g_z} \Gamma_u=\Gamma_{{g_z}^{-1}ug_z}$. So we may represent $H^1(F, \mathcal{Z}(\bG))$  by the $F$-classes of elements $g_z^{-1}F(g_z)$ where $z \in H^1(F, Z(\bG))$.

Recall that we have a regular embedding $i : \bG \rightarrow \wt{\bG}$ and a canonical quotient map $C_{\wt \bG} (\wt u) \rightarrow A_{\wt \bG} (\wt u)$ where $\wt u \in \wt \bG$ such that $i(u)=\wt u$. The group $\wt \bG$ is an almost direct product $\wt \bG = \bG Z(\wt{\bG})$ so we have an almost direct product $C_{\wt \bG} (\wt u)= C_\bG(u)Z(\wt{\bG} )$. The restriction of $i$ to the centraliser $C_\bG(u)$ induces a surjective
map $A_\bG(u) \rightarrow A_{\wt \bG} (\wt u)$. Combining this with the natural inclusion maps $Z(\bG) \rightarrow C_\bG(u) \rightarrow A_\bG(u)$ we obtain the exact sequence
\[  Z(\bG) \rightarrow A_\bG(u) \rightarrow A_{\wt \bG} (\wt u) \rightarrow 1,\]

cf. \cite[p. 78]{Tay12}. We thus obtain a canonical surjective homomorphism $-: Z(\bG) \rightarrow A_\bG(u)$, since $A_{\wt \bG} (\wt u)=1$ for $\wt \bG=\GL_n(k)$. From this, we get the surjective morphism on the corresponding $F$-classes
\[ \varphi_u: H^1(F, Z(\bG)) \rightarrow H^1(F, A_\bG(u)) \]
\[ z \mapsto \lfloor\overline{g_z^{-1}F(g_z)}\rfloor,\]
where $\lfloor x\rfloor$ denotes the $F$-class of $A_\bG(u)$ represented by $x \in A_\bG(u)$. Note that $\varphi_u$ is a homomorphism since $g_z^{-1}F(g_z) \in Z(\bG)$ and the morphism $-: Z(\bG) \rightarrow A_\bG(u)$ is surjective. 

Recall that each $G$-classes of $u$ is indexed by a corresponding element $a \in H^1(F, A_\bG(u))$. Indeed, by \cite[\S 4.3.4]{Gebook}, the $G$-class of $g_z^{-1}ug_z$ corresponds to $\lfloor\overline{g_z^{-1}F(g_z)}\rfloor \in H^1(F, A_\bG(u))$. Therefore, by setting $u_1=u$, we have
\[ {}^{g_z} \Gamma_{u_1}=\Gamma_{g_z^{-1}ug_z}=\Gamma_{\lfloor\overline{g_z^{-1}F(g_z)}\rfloor}=\Gamma_{\varphi_u(z)},\]
and hence, assuming $u_a=g^{-1}_{z'}ug_{z'}$ for some $z' \in \bG$, we have
\[ {}^{g_z} \Gamma_a={}^{g_z} \Gamma_{u_a}= \Gamma_{g^{-1}_zg^{-1}_{z'}ug_{z'}g_z}=
\Gamma_{\lfloor \overline{{(g_{z'}g_z)}^{-1}F(g_{z'}g_z)\rfloor}}=
\Gamma_{\lfloor \overline{g^{-1}_z F(g_z)} \overline{g^{-1}_{z'} F(g_{z'})} \rfloor}=
\Gamma_{\varphi_u(z) \varphi_u(z')},\]
since $g^{-1}_zF(g_z) \in Z(\bG)$ and $a=\lfloor \overline{g^{-1}_{z'}F(g_{z'})}  \rfloor=\varphi_u(z') \in H^1(F, A_\bG(u))$.
\end{proof}

We are now in a position to state the main result of this section which concerns the distribution of an irreducible character as a constituent of some GGGCs. This can be seen as a generalized and analogue form of \cite[Proposition 15.13 and Corollaire 15.14 ]{bo} (see also \cite[Theorem 3.1]{bru}).

\begin{theorem}\label{main1}

For any $\chi_{\wt s, \lambda} \in \irr {\wt G}$ and any $u \in {\mathcal{O}^*_{\wt \chi}}^F$, we have $\langle {\Gamma_{u_a}}=\Gamma_a, \res^{\wt G}_G ~\chi_{\wt s, \lambda} \rangle_{G}=1$ where $a \in H^1(F, A_\bG(u))$. We write $\chi_{s, \lambda, 1}$ for the common constituent of $\Gamma_1$ and $\res^{\wt G}_G~\chi_{\wt s, \lambda}$. Moreover, for $\xi \in \irr {A_{G^*}(s)_\lambda}$, we define
\[\chi_{s, \lambda, \xi}={}^{g_z} \chi_{s, \lambda, 1}, \]
where $z$ is any element of $H^1(F, \mathcal{Z}(\bG))$ such that ${\hat{\omega}}_{s, \lambda}^0(z)=\xi$, and $g_z \in \bG$ such that $g_z^{-1}F(g_z) \in z$. Finally, for any $a \in H^1(F, A_\bG(u))$ and $\xi \in \irr {A_{G^*}(s)_\lambda}$, the character $\chi_{s, \lambda, \xi}$ is a constituent of $\Gamma_a$ if and only if ${\hat{\omega}}_{s, \lambda}^0 \circ \varphi_u^{-1}(a)=\xi$.
\end{theorem}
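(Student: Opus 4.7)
I would organise the proof around the three clauses of the statement. Let $u := u_1$ be the rational representative of $\mathcal{O}^*_{\wt\chi}$ fixed in Convention~\ref{conv1}, and let $a$ range over $H^1(F, A_\bG(u))$. The first clause---existence and uniqueness of the common constituent---is essentially the paragraph preceding the statement: equation~(\ref{G1}) gives $\wt{\Gamma_u} = \ind_G^{\wt G}\Gamma_u$, Lemma~\ref{restrict-GGGCs} decomposes $\res^{\wt G}_G \wt{\Gamma_u} = \bigoplus_a \Gamma_a$, and each $u_a$ lies in $\mathcal{O}^*_{\wt\chi}$ since the $G$-classes inside $\mathcal{O}^F_u$ are contained in the $\bG$-orbit $\mathcal{O}^*_{\wt\chi}$. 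Frobenius reciprocity together with Lemma~\ref{GGGC-GL-1} (exactly as in Lemma~\ref{GGGC-SL}) then yields $\langle \Gamma_a, \res^{\wt G}_G \chi_{\wt s, \lambda} \rangle_G = 1$ for every $a$, so that $\chi_{s,\lambda,1}$ is well-defined as the unique common constituent of $\Gamma_1$ and $\res^{\wt G}_G \chi_{\wt s,\lambda}$.

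\emph{Well-definedness and the multiplicity formula.} Next I would confirm that $\chi_{s,\lambda,\xi} := {}^{g_z}\chi_{s,\lambda,1}$ does not depend on the chosen $z \in (\hat\omega^0_{s,\lambda})^{-1}(\xi)$. If $z, z'$ both map to $\xi$, then $\hat\omega^0_{s,\lambda}(z{z'}^{-1}) = 1$, and equation~(\ref{Eq3.1.1}) of Section~\ref{sec4} applied with trivial label shows that $g_{z{z'}^{-1}}$ fixes $\chi_{s,\lambda,1}$; injectivity of $\xi \mapsto \chi_{s,\lambda,\xi}$ likewise follows from~(\ref{Eq3.1.1}), so the labelling exhausts all $|A_{G^*}(s)_\lambda|$ constituents of $\res^{\wt G}_G \chi_{\wt s,\lambda}$. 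For the third clause the key computation is
\[
\langle \Gamma_a, \chi_{s,\lambda,\xi} \rangle_G
= \langle \Gamma_a, {}^{g_z}\chi_{s,\lambda,1} \rangle_G
= \langle {}^{g_z^{-1}}\Gamma_a, \chi_{s,\lambda,1} \rangle_G
= \langle \Gamma_{\varphi_u(z)^{-1} a}, \chi_{s,\lambda,1} \rangle_G,
\]
where the last equality uses Lemma~\ref{phiu}. By the uniqueness proved in the first step, applied to the new index $\varphi_u(z)^{-1}a$, this equals $1$ precisely when $\varphi_u(z) = a$; combined with $\hat\omega^0_{s,\lambda}(z) = \xi$, this is exactly the announced condition $\xi = \hat\omega^0_{s,\lambda} \circ \varphi_u^{-1}(a)$.

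\emph{Expected main obstacle.} The genuinely delicate point is the interpretation of the composition $\hat\omega^0_{s,\lambda} \circ \varphi_u^{-1}$, since $\varphi_u$ is only known to be surjective. Read set-theoretically as ``$\xi \in \hat\omega^0_{s,\lambda}(\varphi_u^{-1}(a))$'', the equivalence above is immediate from the computation; to upgrade it to a bona fide function one must establish $\ker\varphi_u \subseteq \ker\hat\omega^0_{s,\lambda}$. In type $\mathsf{A}$ this should follow from the compatibility---via the Springer correspondence and Lusztig--Spaltenstein duality---between the component group $A_\bG(u)$ attached to the wave-front set of $\chi_{\wt s,\lambda}$ and the component group $A_{G^*}(s)_\lambda$ on the dual semisimple side; carrying through this compatibility in the present context is the substantive part of the argument and is where I would spend most of the effort.
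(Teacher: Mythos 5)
Your outline of the first clause and the well-definedness of $\chi_{s,\lambda,\xi}$ via \eqref{Eq3.1.1} is fine and matches the paper. But the proof of the third clause has a genuine gap that you have identified but not closed, together with a subtle circularity in the step you describe as straightforward.

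First, the gap you flag — establishing $\ker\varphi_u \subseteq \ker\hat{\omega}^0_{s,\lambda}$ — is indeed indispensable, but you leave it unproved and suggest a heavy route through the Springer correspondence and Lusztig--Spaltenstein duality. The paper's proof is much more elementary and needs none of that machinery: suppose $z_0 \in \ker\varphi_u \setminus \ker\hat{\omega}^0_{s,\lambda}$, and apply $g_{z_0}$ to $\Gamma_1 = \chi_{s,\lambda,1} + \cdots$. By Lemma \ref{phiu} and $z_0 \in \ker\varphi_u$ the left side is fixed, while by \eqref{Eq3.1.1} the marked constituent becomes $\chi_{s,\lambda,\hat{\omega}^0_{s,\lambda}(z_0)} \neq \chi_{s,\lambda,1}$. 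Thus $\Gamma_1$ would share two distinct constituents with $\res^{\wt G}_G\chi_{\wt s,\lambda}$, contradicting Lemma \ref{GGGC-SL}. That one-line contradiction is the entire content of the key claim; the duality/Springer compatibility you invoke is not needed.

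Second, the intermediate assertion that $\langle \Gamma_{\varphi_u(z)^{-1}a}, \chi_{s,\lambda,1}\rangle_G$ ``equals $1$ precisely when $\varphi_u(z) = a$'' does not follow from ``the uniqueness proved in the first step.'' The first clause only says each $\Gamma_b$ has a unique common constituent with $\res^{\wt G}_G\chi_{\wt s,\lambda}$; it says nothing about which $\Gamma_b$ that constituent is $\chi_{s,\lambda,1}$ for. In fact $\chi_{s,\lambda,1}$ is a constituent of $\Gamma_b$ exactly when $b \in \ker(\hat{\omega}^0_{s,\lambda}\circ\varphi_u^{-1})$, which need not be trivial since the paper establishes only $\ker\varphi_u \subseteq \ker\hat{\omega}^0_{s,\lambda}$, not equality. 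So your conclusion is circular — it already presupposes the third clause for $\xi = 1$. The paper avoids this by going the other direction: from $\Gamma_1 = \chi_{s,\lambda,1}+\cdots$, conjugation by $g_z$ directly yields $\langle\Gamma_{\varphi_u(z)},\chi_{s,\lambda,\hat{\omega}^0_{s,\lambda}(z)}\rangle = 1$, and once $\ker\varphi_u \subseteq \ker\hat{\omega}^0_{s,\lambda}$ is known, every $a$ can be written as $\varphi_u(z)$ with $\hat{\omega}^0_{s,\lambda}(z)$ well-defined, giving the biconditional in the form stated.
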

\begin{proof}
The first part is a well-known result of Lusztig covered in Section 4. For the second part, note that if $u_1 \in {{\mathcal{O}^*}^F_{\chi_{\wt s, \lambda}}} \cap G$, then we have
\[   \Gamma_{1}= {\chi_{s, \lambda, 1}} + \cdots. \label{abc} \tag{5.1.1} \]

Now considering $g_z \in \bG$ acting on both sides of \eqref{abc}, we get
\[    \langle \Gamma_{\varphi_u(z)},    {\chi_{s, \lambda, {\hat{\omega}}_{s, \lambda}^0(z)}} \rangle =1.\]
Moreover, we claim that Ker $\varphi_u \subset$ Ker ${\hat{\omega}}_{s, \lambda}^0$. Otherwise, there exists $z_0 \in \mathrm{Ker}~ \varphi_u \setminus \mathrm{Ker}~ {\hat{\omega}}_{s, \lambda}^0$. Now acting $g_{z_0}$ on \eqref{abc} we get 
\[   \langle \Gamma_{1},   {\chi_{s, \lambda, {\hat{\omega}}_{s, \lambda}^0(z_0)}}\rangle \neq 0, \]
which is in contradiction with Lemma \ref{GGGC-SL}. Therefore, we obtain the claim. In particular the map ${\hat{\omega}}_{s, \lambda}^0 \circ \varphi_u^{-1}$ is well defined.
\end{proof}
\section{An equivariant character labelling}\label{sec6}
In this section, we are going to determine the action of automorphisms on arbitrary irreducible characters of $\SL^\epsilon_n(q)$. Note that assuming Convention \ref{conv1} holds, Theorem \ref{main1} gives a parametrization of irreducible characters of $\SL^\epsilon_n(q)$ in terms of the three labels $s, \lambda, \xi$ as follows:

Let $\chi \in \irr G$. Following Section 4, we may choose an irreducible character $\wt \chi=\rho_{\wt s, \lambda} \in \irr {\wt G}$ lying above $\chi$. One can associate to $\wt \chi$ a wave front set $\mathcal{O}^*_{\wt \chi}$, and for any unipotent element $u \in {\mathcal{O}^*_{\wt \chi}}^F$,  there is a corresponding GGGC $\wt {\Gamma_u}$. Moreover, we may choose $u\in {\mathcal{O}^*_{\wt \chi}}^F$ as in Convention \ref{conv1}. Note that all the irreducible characters of $\wt G$ lying above $\chi$ have the same wave front set \footnote{For, if $\wt \chi_1, \wt \chi_2 \in \irri {\wt G} {\chi}$, then $\langle \Gamma_{u}, \chi \rangle\neq 0$ and $\langle \Gamma_{v}, \chi \rangle\neq 0$ for some $u \in {\mathcal{O}^{*F}_{\wt \chi_1}}$ and $v \in {\mathcal{O}^{*F}_{\wt \chi_2}}$. Now inducing to $\wt G$, we get $\wt {\Gamma_u}= \wt \chi_1+\wt \chi_2+\cdots$ and $\wt {\Gamma_v}= \wt \chi_1+\wt \chi_2+\cdots$. Now the uniqueness of the wave front sets of  $\wt \chi_1$ and $\wt \chi_2$ implies that $\mathcal{O}^*_{\wt \chi_1}= \mathcal{O}^*_{\wt \chi_2}$.}. Therefore, the choice of $u$ here is independent of the actual choice of $\wt \chi$. Then $\chi$ is the common constituent of $\res^{\wt G}_G~{\wt \chi}$ and some $\Gamma_{a}$ where $a=\varphi_u(z)$ for some $z \in H^1(F, \mathcal{Z}(\bG))$, (see Lemmas \ref{GGGC-SL} and \ref{phiu}). This common constituent is labelled as $\chi=\chi_{s, \lambda, \omega^0_{s, \lambda}(z)}$, (see Theorem \ref{main1}). In the sequel, for any semisimple element $s \in G^*$, let $\mathfrak{U}(C^\circ_{G^*}(s))$ denote the set of unipotent characters of $C^\circ_{ G^*}(s)$ (note that $\mathcal{E}(C^\circ_{ G^*}(s), 1)=\mathcal{E}(C_{\wt G^*}(\wt s), 1)$).

\begin{theorem}\label{main-result}
Assume that Convention \ref{conv1} holds. For any semisimple element $s \in G^*$, $\lambda \in \mathfrak{U}(C^\circ_{G^*}(s))$, and $z \in H^1(F, \mathcal{Z}(\bG))$, one has
\[ {}^\sigma \chi_{s, \lambda, \omega^0_{s, \lambda}(z)}=\chi_{{\sigma^*}^{-1}(s), \sigma^*(\lambda), \omega^0_{s, \lambda}(\sigma(z))},  \]
where $\sigma \in \langle F_p, \gamma \rangle$ and $\sigma^* \in \Aut {G^*}$ is its dual in the sense of \cite[Definition 2.1]{CS13}.
\end{theorem}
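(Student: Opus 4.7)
\textbf{Proof plan for Theorem \ref{main-result}.} My strategy is to first handle the base case $z=1$ (i.e.\ $\xi=\omega^0_{s,\lambda}(1)$ is trivial), and then bootstrap to general $z$ by exploiting the definition $\chi_{s,\lambda,\omega^0_{s,\lambda}(z)}={}^{g_z}\chi_{s,\lambda,1}$ together with the identity $\sigma\circ\mathrm{int}(g_z)=\mathrm{int}(\sigma(g_z))\circ\sigma$. The entire proof rests on the characterization (Theorem~\ref{main1}) of $\chi_{s,\lambda,1}$ as the unique common irreducible constituent of the two multiplicity-free characters $\res^{\wt G}_G\chi_{\wt s,\lambda}$ and $\Gamma_{u_1}$, and on transporting both of these through $\sigma$.

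For the base case, I would apply $\sigma\in\langle F_p,\gamma\rangle$ to the equation $\langle \Gamma_{u_1},\res^{\wt G}_G\chi_{\wt s,\lambda}\rangle_G=1$ and use three ingredients. First, by \cite[Theorem~3.1]{CS13} the parametrisation of $\irr{\wt G}$ used here is $\Aut{\wt G}$-equivariant, so ${}^\sigma\chi_{\wt s,\lambda}=\chi_{{\sigma^*}^{-1}(\wt s),\sigma^*(\lambda)}$, which immediately implies ${}^\sigma\res^{\wt G}_G\chi_{\wt s,\lambda}=\res^{\wt G}_G\chi_{{\sigma^*}^{-1}(\wt s),\sigma^*(\lambda)}$. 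Second, by Lemma~\ref{GGGC-action} combined with Convention~\ref{conv1}, we have ${}^\sigma\Gamma_{u_1}=\Gamma_{\sigma(u_1)}=\Gamma_{u_1}$. Third, since $F_p$ and $\gamma$ permute unipotent $\wt\bG$-classes trivially, the wave front set of $\chi_{{\sigma^*}^{-1}(\wt s),\sigma^*(\lambda)}$ equals the wave front set of $\chi_{\wt s,\lambda}$, so the same $u_1$ is a valid choice of Convention~\ref{conv1} representative for the new character. Combining these, ${}^\sigma\chi_{s,\lambda,1}$ is the unique common constituent of $\res^{\wt G}_G\chi_{{\sigma^*}^{-1}(\wt s),\sigma^*(\lambda)}$ and $\Gamma_{u_1}$, which is by definition $\chi_{{\sigma^*}^{-1}(s),\sigma^*(\lambda),1}$.

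For the bootstrap, start from $\chi_{s,\lambda,\omega^0_{s,\lambda}(z)}={}^{g_z}\chi_{s,\lambda,1}$ (Theorem~\ref{main1}). Then
\[{}^\sigma\chi_{s,\lambda,\omega^0_{s,\lambda}(z)}={}^\sigma{}^{g_z}\chi_{s,\lambda,1}={}^{\sigma(g_z)}{}^\sigma\chi_{s,\lambda,1}={}^{\sigma(g_z)}\chi_{{\sigma^*}^{-1}(s),\sigma^*(\lambda),1},\]
using the base case in the last step. It remains to verify that $\sigma(g_z)$ is a legitimate representative of $\sigma(z)\in H^1(F,Z(\bG))$: this is immediate because $\sigma(g_z)^{-1}F(\sigma(g_z))=\sigma(g_z^{-1}F(g_z))$ still lies in $Z(\bG)$ and its $F$-class is $\sigma(z)$. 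Applying the defining formula of Theorem~\ref{main1} to $\sigma(g_z)$ then yields the right-hand side of the theorem.

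The main obstacle I anticipate is bookkeeping around the map $\omega^0_{s,\lambda}$: strictly speaking the above computation produces the label $\omega^0_{{\sigma^*}^{-1}(s),\sigma^*(\lambda)}(\sigma(z))$, so one must verify that this agrees with $\omega^0_{s,\lambda}(\sigma(z))$ under the natural identifications provided by $\sigma^*$. This comes down to checking that the construction of $\hat{\omega}^0_s$ in \cite[(8.4)]{bo} and of the restriction map $\hat{i}^*_\lambda$ commute with the action of the dual automorphism $\sigma^*$ on semisimple classes of $G^*$ and on unipotent characters of $C^\circ_{G^*}(s)$. This equivariance is essentially built into the parametrisation of \cite[Theorem~3.1]{CS13}, but verifying that the diagonal-automorphism coefficients $\hat{\omega}^0_s$ intertwine correctly with $\sigma^*$ is the one genuinely technical point; once that is in place, all other pieces of the proof are formal.
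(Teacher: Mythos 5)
Your argument is essentially the paper's proof repackaged. The two inputs you rely on — the $\Aut{\wt G}$-equivariance of the Cabanes--Sp\"ath parametrisation of $\irr{\wt G}$ from \cite[Theorem 3.1]{CS13}, and the equivariance ${}^\sigma\Gamma_u=\Gamma_{\sigma(u)}$ of GGGCs (Lemma \ref{GGGC-action}) together with the $\sigma$-stability of $\Gamma_{u_1}$ from Convention \ref{conv1} — are exactly what the paper transports through $\sigma$ before intersecting. The only organisational difference is that you isolate the base case $z=1$ and then bootstrap via $\chi_{s,\lambda,\omega^0_{s,\lambda}(z)}={}^{g_z}\chi_{s,\lambda,1}$, whereas the paper treats all $z$ at once by applying $\sigma$ to the two constituent sets $A_{\wt s,\lambda}$ and $A_{u_z}$ and then intersecting; the intertwining relation $\sigma\circ\mathrm{int}(g_z)=\mathrm{int}(\sigma(g_z))\circ\sigma$ you use in the bootstrap is precisely the computation the paper performs to establish ${}^\sigma\Gamma_{\varphi_u(z)}=\Gamma_{\varphi_u(\sigma(z))}$. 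The bookkeeping caveat you flag at the end — that the argument strictly produces the third label $\omega^0_{{\sigma^*}^{-1}(s),\sigma^*(\lambda)}(\sigma(z))$ rather than $\omega^0_{s,\lambda}(\sigma(z))$ — is also left implicit in the paper's final displayed equality, where the statement is read modulo the identification of $\irr{A_{G^*}(s)_\lambda}$ with $\irr{A_{G^*}({\sigma^*}^{-1}(s))_{\sigma^*(\lambda)}}$ induced by $\sigma^*$; so you are, if anything, being more careful than the published proof on that point rather than leaving a genuine gap.
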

\begin{proof}

 Let $\chi_{\wt s, \lambda} \in \irr {\wt G}$ lying over $\chi_{s, \lambda, \omega^0_{s, \lambda}(z)}$ as described in Section 4. Let $u, u_1 \in \mathcal{O}^F_{\chi_{\wt s, \lambda}}$ be as in Convention \ref{conv1}. By Theorem \ref{main1}, $\chi_{s, \lambda, \omega^0_{s, \lambda}(z)}$ is the common constituent of $\Gamma_{\varphi_u(z)}$ and $\res^{\wt G}_G~\chi_{\wt s, \lambda}$. We write $A_{\wt s, \lambda}$ for the set of irreducible constituents of $\res^{\wt G}_G~\chi_{\wt s, \lambda}$, and $A_{u_z}$ for the set of constituents of $\Gamma_{\varphi_u(z)}$. So we have
\[  A_{\wt s, \lambda} \cap  A_{u_z}=\{\chi_{s, \lambda, \omega^0_{s, \lambda}(z)} \}.\]
Since $Z(\wt \bG)$ is connected and the parameters of the irreducible characters of $\wt G$ are chosen so that they are $\Aut {\wt G}$-equivariant (see Section \ref{sec4}), Theorem 3.1 of \cite{CS13} implies that
\[ {}^\sigma  \chi_{\wt s, \lambda}=\chi_{{\sigma^*}^{-1}(\wt s), \sigma^*(\lambda)}. \]
Therefore,
\[ {}^\sigma A_{\wt s, \lambda}=  A_{{\sigma^*}^{-1}(\wt s), \sigma^*(\lambda)}. \tag{6.1.1} \label{5.1.1}\]
On the other hand, by Lemma \ref{GGGC-action}, we have
\[ {}^\sigma \Gamma_{\varphi_u(z)}={}^{\sigma g_z} \Gamma_1={}^{\sigma(g_z) \sigma} \Gamma_1={}^{\sigma(g_z)} \Gamma_1 = {}^{g_{\sigma(z)}} \Gamma_1=\Gamma_{\varphi_u(\sigma(z))},\]

since $^{\sigma} \Gamma_1=\Gamma_1$, and ${\sigma(g_z)}^{-1}F(\sigma(g_z))$ and ${g^{-1}_{\sigma(z)}}F({g_{\sigma(z)}})$ belong to the same $F$-class (note that, by definition, ${g^{-1}_{z}}F({g_{z}}) \in z$. Therefore, $\sigma({g^{-1}_{z}})\sigma(F({g_{z}}))=\sigma({g^{-1}_{z}})F(\sigma(g_{z})) \in \sigma(z)$ and so, by Lemma \ref{tau}, ${g_{\sigma(z)}}$ and $\sigma({g_z})$ act identically on GGGCs). Thus we get
\[   ^{\sigma} A_{u_z}= A_{u_{\sigma(z)}}. \tag{6.1.2} \label{5.2.2}      \]

Now the relations \eqref{5.1.1} and \eqref{5.2.2} imply that
\[  \{{}^{\sigma} \chi_{s, \lambda, \omega^0_{s, \lambda}(z)}\}={}^{\sigma}A_{\wt s, \lambda} \cap ~{}^{\sigma} A_{u_z}=A_{{\sigma^*}^{-1}(\wt s), \sigma^*(\lambda)} \cap A_{u_{\sigma(z)}}= \{ \chi_{{\sigma^*}^{-1}(s), \sigma^*(\lambda), \omega^0_{s, \lambda}(\sigma(z))} \}.         \]
\end{proof}


\section{Applications}
In this section, we review some potential applications of Theorem \ref{main-result}. In \cite{CS}, among other things, the authors investigated the global side of the so-called inductive McKay conditions. Indeed, they verified the so-called stabilizer condition as follow:

\begin{theorem}\cite[Theorem 4.1]{CS}
For any $\wt \chi \in \irr {\wt G}$ there is a $\chi_0 \in \irri G {\wt \chi}$ such that
\[ (\wt G \times \langle F_p, \gamma \rangle)_{\chi_0}={\wt G}_{\chi_0} \times (\langle F_p, \gamma \rangle)_{\chi_0}.  \]
\end{theorem}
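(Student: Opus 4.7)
The plan is to use the labelling $\chi_0 = \chi_{s,\lambda,1}$ as the distinguished triple in which the third coordinate is the trivial character, then combine Theorem \ref{main-result} (action of $\langle F_p,\gamma\rangle$) with formula \eqref{Eq3.1.1} (action of diagonal automorphisms from $\wt G$) to read off separate stabilizer conditions. Concretely, for $(g,\sigma)\in (\wt G\times \langle F_p,\gamma\rangle)_{\chi_0}$, the inner automorphism $\mathrm{int}(g)$ of $G$ coincides with a diagonal automorphism $\tau^{\bG}_z$ for some $z\in H^1(F,\mathcal{Z}(\bG))$ (by Lemma \ref{tau} and Section \ref{sec4}), so the effect of $(g,\sigma)$ on $\chi_0$ may be computed in two stages using the equivariance formulas we already have.

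First I would apply Theorem \ref{main-result} with $z=1$ and use $\omega^0_{s,\lambda}(1)=1$ to obtain
\[
{}^{\sigma}\chi_{s,\lambda,1}=\chi_{{\sigma^*}^{-1}(s),\,\sigma^*(\lambda),\,1}.
\]
Then, applying \eqref{Eq3.1.1} to this character (with the appropriate relabelling of $\omega^0$), one has
\[
{}^{(g,\sigma)}\chi_0 = \chi_{{\sigma^*}^{-1}(s),\,\sigma^*(\lambda),\,\omega^0_{{\sigma^*}^{-1}(s),\sigma^*(\lambda)}(z)}.
\]
Setting this equal to $\chi_{s,\lambda,1}$ and invoking the uniqueness of the triples parametrising $\irr G$ supplied by Theorem \ref{main1}, I would extract two independent conditions:
(i) the pair $(\wt s,\lambda)$ is fixed by $\sigma$ in the sense of \cite[Thm.~3.1]{CS13}, so that $\sigma\in (\langle F_p,\gamma\rangle)_{\wt \chi}$; and
(ii) $\omega^0_{s,\lambda}(z)=1$.

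From (i), re-applying Theorem \ref{main-result} with $z=1$ gives ${}^{\sigma}\chi_0=\chi_0$, hence $\sigma\in(\langle F_p,\gamma\rangle)_{\chi_0}$. Condition (ii), combined with \eqref{Eq3.1.1}, yields ${}^{g}\chi_0=\chi_0$, that is $g\in\wt G_{\chi_0}$. This gives the containment $(\wt G\times\langle F_p,\gamma\rangle)_{\chi_0}\subseteq \wt G_{\chi_0}\times (\langle F_p,\gamma\rangle)_{\chi_0}$; the reverse containment is automatic.

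The main obstacle is step (i): one must be sure that equality of the first two labels of the character forces $\sigma$ to genuinely fix $\wt\chi=\chi_{\wt s,\lambda}$, not merely its restriction. This rests on the $\Aut{\wt G}$-equivariance of the Cabanes--Sp\"ath parametrization of $\irr {\wt G}$ (which was already exploited in the proof of Theorem \ref{main-result}), together with the fact that, under our choice of $\chi_0$ with third coordinate $1$, the ``canonical'' base point is preserved by $\sigma$ because $\omega^0_{s,\lambda}$ is a homomorphism sending $1$ to $1$ and $\sigma(1)=1$. A secondary technical point to verify is that the identification of $\mathrm{int}(g)$ with some $\tau^{\bG}_z$ is well-defined modulo elements acting trivially on $\irr G$, so that the resulting $z$ is unambiguous; this follows from Lemma \ref{tau}.
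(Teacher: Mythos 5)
Your proof takes essentially the same route as the paper: you set $\chi_0=\chi_{s,\lambda,1}$, apply Theorem \ref{main-result} for the $\langle F_p,\gamma\rangle$-action and formula \eqref{Eq3.1.1} for the diagonal action, and then deduce the factorization of the stabilizer from the uniqueness of the triple parametrization. The paper records exactly these two equivariance formulas and declares ``the rest is straightforward''; your write-up supplies that remaining bookkeeping correctly, including the observation that because $\omega^0_{s,\lambda}$ is a homomorphism with $\omega^0_{s,\lambda}(1)=1$, the base-point $\chi_{s,\lambda,1}$ is the right choice to decouple the two kinds of automorphisms.
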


Using Theorem \ref{main-result}, we can precisely determine the character $\chi_0$.

\begin{theorem}
If $\wt \chi=\chi_{\wt s, \lambda} \in \irr {\wt G}$, then for the irreducible character $\chi_0=\chi_{s, \lambda, 1} \in \irri G {\wt \chi}$ we have
\[ (\wt G \times \langle F_p, \gamma \rangle)_{\chi_0}={\wt G}_{\chi_0} \times (\langle F_p, \gamma \rangle)_{\chi_0}.  \]
\end{theorem}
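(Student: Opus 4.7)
The inclusion $\wt G_{\chi_0}\times\langle F_p,\gamma\rangle_{\chi_0}\subseteq (\wt G\times\langle F_p,\gamma\rangle)_{\chi_0}$ is automatic, so the content lies in the reverse inclusion. The plan is to exploit the characterisation of $\chi_0=\chi_{s,\lambda,1}$ provided by Theorem \ref{main1}, namely as the unique common irreducible constituent of the GGGC $\Gamma_1$ (attached to the $\langle F_p,\gamma\rangle$-stable representative $u_1$ fixed in Convention \ref{conv1}) and of $\res^{\wt G}_G \chi_{\wt s,\lambda}$, and to track separately how each of these two characters is moved by a pair $(g_z,\sigma)\in (\wt G\times\langle F_p,\gamma\rangle)_{\chi_0}$.

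Write $g_z\in\bG$ with $g_z^{-1}F(g_z)$ representing $z\in H^1(F,\mathcal{Z}(\bG))$. On the GGGC side, Convention \ref{conv1} together with Lemma \ref{GGGC-action} gives ${}^\sigma\Gamma_1=\Gamma_1$, while Lemma \ref{phiu} gives ${}^{g_z}\Gamma_1=\Gamma_{\varphi_u(z)}$; hence ${}^{g_z\sigma}\Gamma_1=\Gamma_{\varphi_u(z)}$. On the restriction side, $\res^{\wt G}_G\chi_{\wt s,\lambda}$ is automatically $\wt G$-invariant (it is a class function on $G$ coming from $\wt G$), while the $\Aut{\wt G}$-equivariance of the parametrisation from \cite[Theorem 3.1]{CS13} used in Section \ref{sec4} gives ${}^\sigma\chi_{\wt s,\lambda}=\chi_{{\sigma^*}^{-1}(\wt s),\sigma^*(\lambda)}$; combining, ${}^{g_z\sigma}\res^{\wt G}_G\chi_{\wt s,\lambda}=\res^{\wt G}_G\chi_{{\sigma^*}^{-1}(\wt s),\sigma^*(\lambda)}$. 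The assumption ${}^{g_z\sigma}\chi_0=\chi_0$ then says that $\chi_0$ is also the common constituent of $\Gamma_{\varphi_u(z)}$ and $\res^{\wt G}_G\chi_{{\sigma^*}^{-1}(\wt s),\sigma^*(\lambda)}$.

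From the restriction side, $\chi_0$ lies in both $\res^{\wt G}_G\chi_{\wt s,\lambda}$ and $\res^{\wt G}_G\chi_{{\sigma^*}^{-1}(\wt s),\sigma^*(\lambda)}$; since $\wt G/G$ is cyclic, the corresponding constituent sets are full $\wt G$-orbits in $\irr G$, so the two restrictions must coincide. Together with the labelling conventions of Section \ref{sec4} this forces ${\sigma^*}^{-1}(s)=s$ and $\sigma^*(\lambda)=\lambda$, and Theorem \ref{main-result} applied to the trivial class in $H^1(F,\mathcal{Z}(\bG))$ yields ${}^\sigma\chi_{s,\lambda,1}=\chi_{s,\lambda,1}$; hence $\sigma\in\langle F_p,\gamma\rangle_{\chi_0}$. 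From the GGGC side, $\chi_0$ is a constituent of $\Gamma_{\varphi_u(z)}$, so the last assertion of Theorem \ref{main1} gives $\hat\omega^0_{s,\lambda}(z)=1$, and \eqref{Eq3.1.1} then yields ${}^{g_z}\chi_0=\chi_{s,\lambda,\hat\omega^0_{s,\lambda}(z)}=\chi_{s,\lambda,1}=\chi_0$; hence $g_z\in\wt G_{\chi_0}$. Combining, $(g_z,\sigma)\in \wt G_{\chi_0}\times\langle F_p,\gamma\rangle_{\chi_0}$ as required.

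The only delicate step is passing from the equality of restrictions to the equality of labels; this is precisely the well-definedness of the $\wt G$-character labels modulo $\ker i^*$ recorded in Section \ref{sec4} (via \cite[11.6]{bo}), and it is already implicit in the proof of Theorem \ref{main-result}, so no additional argument is required.
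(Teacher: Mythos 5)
Your proposal is correct and takes essentially the same approach as the paper: both arguments decompose the action of a pair $(g_z,\sigma)$ using Theorem \ref{main-result} for the field/graph part (which preserves the third label $1$) and the diagonal formula \eqref{Eq3.1.1} for the $\wt G$-part (which preserves $(s,\lambda)$), then conclude each factor must separately fix $\chi_0$. The paper compresses this to two displayed equations and ``the rest is straightforward,'' while you usefully unfold the straightforward step by routing it through the characterisation of $\chi_0$ as the unique common constituent of $\Gamma_1$ and $\res^{\wt G}_G\chi_{\wt s,\lambda}$.
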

\begin{proof}
By Theorem \ref{main-result}, we have
\[ {}^\sigma \chi_{s, \lambda, 1}=\chi_{{\sigma^*}^{-1}(s), \sigma^*(\lambda), 1},  \]
for any $\sigma \in \langle F_p, \gamma \rangle$. Moreover, for any non-trivial element $g_z \in \wt G/GZ(\wt G)$ we have
\[ {}^{g_z} \chi_{s, \lambda, 1}=\chi_{s, \lambda, \omega^0_{s, \lambda}(z)}.  \]
The rest of the proof is straightforward.
\end{proof}

Theorem \ref{main-result} may also be applied to other wide range of questions and conjectures concerning the interaction between the structure of finite groups and the set of their irreducible character degrees, such as the Huppert conjecture \cite{Hu00}. In what follows, let $\cd H$ denote the set of irreducible character degrees of the finite group $H$.

\begin{conj}{(Huppert Conjecture)}
Let $S$ be a finite non-abelian simple group and $H$  be a finite group with $\cd S= \cd H$. Then $H \cong S \times A$ for some finite abelian group $A$.
\end{conj}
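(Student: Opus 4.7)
The Huppert Conjecture is a long-standing open problem, and I would not attempt to prove it in full generality here. What Theorem \ref{main-result} does make accessible is a uniform attack on the case $S=\PSL_n^\epsilon(q)$, since the parametrisation $(s,\lambda,\xi)\mapsto \chi_{s,\lambda,\xi}$ gives an explicit description of $\cd S$ together with the multiplicity with which each degree occurs (via $|A_{G^*}(s)_\lambda|$ and the image of $\omega^0_{s,\lambda}$), and it identifies precisely which characters of $\SL_n^\epsilon(q)$ descend to $\PSL_n^\epsilon(q)$, namely those with $\omega^0_{s,\lambda}(z)=1$ for every $z\in H^1(F,\mathcal{Z}(\bG))$. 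My proposal is to use this information to execute the standard four-step Huppert program for $S=\PSL_n^\epsilon(q)$.

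First I would fix $H$ with $\cd H=\cd S$ and establish $H'=H''$ (Step 1): if $H/H'$ were nontrivial then some linear degree of $H/H'$ would have to lie in $\cd S$, but the small degrees of $\PSL_n^\epsilon(q)$ coming from semisimple characters are readily enumerated through the labels $(s,\lambda,\xi)$ and can be shown to exclude any nontrivial $1$-dimensional contribution. Next, Steps 2 and 3 identify a chief factor $H/M\cong S$ and then show $H'\cong S$; the key input is the set of prime divisors of degrees in $\cd S$ and their grouping into Lusztig series. The refined degree data from Theorem \ref{main-result}, in particular the multiplicities of degrees as $\xi$ varies over $\irr {A_{G^*}(s)_\lambda}$, provide enough arithmetic rigidity to rule out competing almost-simple groups whose degree sets might accidentally coincide on small subsets.

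The main obstacle is Step 4, namely showing $H'\cap C_H(H')=1$ so that $H\cong S\times C_H(H')$. Classically this is where Huppert-type arguments become delicate, because one must exclude nontrivial central or perfect extensions of $S$ sharing its degree multiset. Here Theorem \ref{main-result} enters decisively: the faithful characters of $\SL_n^\epsilon(q)$ (which are exactly the $\chi_{s,\lambda,\xi}$ with $\xi$ nontrivial on the image of $Z(\SL_n^\epsilon(q))$ under $\omega^0_{s,\lambda}$) are explicitly labelled, and their degrees can be compared with $\cd {\PSL_n^\epsilon(q)}$ to exclude faithful covers. I would expect this step to require a case analysis on $n$ and on the arithmetic of $q$ (in particular on $\gcd(n,q-\epsilon)$), with the small-rank cases following from the explicit degree formulas and the general-rank case demanding further input from Lusztig's Jordan decomposition. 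A uniform solution would remain out of reach, but the equivariant labelling is, to my mind, the correct tool for reducing Huppert's conjecture for type \textsf{A} to a finite residue of arithmetic checks.
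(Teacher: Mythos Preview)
The statement is a \emph{conjecture} in the paper, not a theorem: the paper offers no proof, only a short paragraph explaining why Theorem~\ref{main-result} is relevant to future attacks on the case $S=\PSL_n^\epsilon(q)$. You correctly recognise this and present a programme rather than a proof, so in that sense there is no discrepancy to flag.

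That said, your programme and the paper's discussion emphasise different bottlenecks, and this is worth noting. The paper's point is that the standard reduction yields an almost simple quotient $H/C_H(H')$ with socle $S$, and the \emph{main obstacle} is then to compare $\cd S$ with $\cd{H/C_H(H')}$; this requires knowing $\cd{S.A}$ for subgroups $A\leq\Out S$, which in turn requires knowing the action of $\Aut S$ on $\irr S$. That is precisely what Theorem~\ref{main-result} supplies, and it is the paper's sole advertised application to Huppert. Your proposal instead places the main weight on what you call Step~4, using the labelling to separate faithful characters of $\SL_n^\epsilon(q)$ from characters of $\PSL_n^\epsilon(q)$ in order to rule out central covers. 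That is a legitimate use of the parametrisation, but it does not exploit the $\langle F_p,\gamma\rangle$-equivariance at all; the equivariance is exactly what is needed for the almost-simple comparison the paper singles out, and your outline passes over that step rather quickly.

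A minor gap: your argument for Step~1 (``if $H/H'$ were nontrivial then some linear degree of $H/H'$ would have to lie in $\cd S$'') is not an obstruction, since $1\in\cd S$ always. The usual Step~1 arguments instead manufacture a non-linear degree of $H/H''$ that fails to lie in $\cd S$, or use divisibility constraints among the degrees; the labelling $(s,\lambda,\xi)$ does not shortcut this.
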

So far the Huppert conjecture has been verified for the families of simple alternating \cite{BTZ} and sporadic groups, and only few families of simple groups of Lie type of rank up to 3. The main approach to prove this conjecture consists of two main steps. We first reduce the conjecture to almost simple groups by showing that $H/C_H(H')$ is an almost simple group with socle $S$. The main obstacle to verify the final implication is to compare $\cd S$ with $\cd {H/C_H(H')}$ for which we need to analyze the action of $\Aut S$ on $\irr S$. Therefore, Theorem \ref{main-result} provides an explicit tool to compute and compare character degrees of different almost simple groups with socle $\PSL^\epsilon_n(q)$ and opens up the way for proving the conjecture for simple linear  and unitary groups of arbitrary large ranks.\\ 

\section{The relations}\label{eq-Sec}
In this section, we discuss how the Main Theorem of this paper might be related to the results of \cite[\S 8]{CS}. The following argument suggested by anonymous colleague who has kindly permited to include this material in the paper.

We now follow the notation of \cite[\S 8]{CS}. Fix a semisimple element $\wt s \in \wt G^*$. For each $\rho \in \irr {{W(\wt s)}^{F^*}} $ fixed by the automorphism induced by $F^*$,  Lusztig–Srinivasan \cite[Thm 3.2]{LS77} have shown that we have an irreducible character 

\[R^{\wt \bG}_\rho[\wt s]=\epsilon_{\wt \bG}\epsilon_{C_{\wt \bG^*}(\wt s)} \frac{1}{|W(\wt s)|} \sum_{w \in W(\wt s)} \hat{\rho}(wF^*) R^{\bG^*}_{{\wt \bT^*}_w}(\wt s),  \]

where $\hat{\rho} \in \irr {W(\wt s) \rtimes \langle F^* \rangle}$ is the Lusztig's canonical extension of $\rho$ defined as in \cite[Section 35]{bo}. Similarly we get a class function

\[R^{{C_{\bG^*}(\wt s)}}_\rho= \frac{1}{|W(\wt s)|} \sum_{w \in W(\wt s)} \hat{\rho}(wF^*) R^{{C_{\bG^*}(\wt s)}}_{{\wt \bT_w^*}}(1)  \]
for ${C_{\bG^*}(\wt s)}$ in which $1$ means the trivial character. The preferred extension of $\rho$ is precisely the unique extension $\hat{\rho}$ such that the corresponding $R^{{C_{\bG^*}(\wt s)}}_\rho$ is a unipotent character of ${C_{\bG^*}(\wt s)}$ , see \cite[Thm. 2.2]{LS77}.

Now the following theorem gives a slight strengthening of Lusztig--Srinivasan’s result.

\begin{theorem}
There exists a unique bijection $J^{\wt \bG}_{\wt s}: \mathcal{E}(\wt G, \wt s) \rightarrow \mathcal{E}(C_{\wt G^*}(\wt s), 1)$ such that
\begin{enumerate}
\item $\mathcal{E}(\wt G, \wt s)=\{R^{\wt \bG}_\rho[\wt s]: \rho \in \irr {{W(\wt s)}^{F^*}}  \} $,
\item $J^{\wt \bG}_{\wt s}(R^{\wt \bG}_\rho[\wt s])=R^{{C_{\bG^*}(\wt s)}}_\rho$ for any $\rho \in \irr {{W(\wt s)}}$,
\item ${}^\sigma R^{\wt \bG}_{{}^{\sigma^*}\rho}[{\sigma^*}(\wt s)]=R^{\wt \bG}_\rho[\wt s]$ for any $\rho \in \irr {{W(\wt s)}}$ and any pair of dual automorphisms $(\sigma, \sigma^*)$.
\end{enumerate}
\end{theorem}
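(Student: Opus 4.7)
The plan is to obtain parts (1) and (2) essentially for free from Lusztig--Srinivasan's original parametrization of $\mathcal{E}(\wt G,\wt s)$, and then to verify the genuinely new content---the equivariance (3)---by applying $\sigma$ directly to the defining formula, invoking the standard equivariance of Deligne--Lusztig induction, and exploiting a uniqueness characterization of the preferred extension.

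Since $\wt{\bG}=\GL_n(\mathbb{K})$ has connected center, the centralizer $C_{\wt{\bG}^*}(\wt s)$ is connected reductive, and the main theorem of \cite{LS77} supplies part (1): the class functions $R^{\wt\bG}_\rho[\wt s]$, as $\rho$ ranges over $\irr{W(\wt s)^{F^*}}$, are (up to signs) pairwise distinct irreducible characters exhausting $\mathcal{E}(\wt G,\wt s)$. For (2), the preferred extension is characterized in \cite[Thm.~2.2]{LS77} as the unique extension $\hat\rho$ for which $R^{C_{\bG^*}(\wt s)}_\rho$ is a (signed) unipotent character of $C_{\bG^*}(\wt s)$; hence $\rho\mapsto R^{C_{\bG^*}(\wt s)}_\rho$ is a bijection onto $\mathcal{E}(C_{\wt G^*}(\wt s),1)$, and the prescription
\begin{equation*}
J^{\wt\bG}_{\wt s}\bigl(R^{\wt\bG}_\rho[\wt s]\bigr):=R^{C_{\bG^*}(\wt s)}_\rho
\end{equation*}
defines a bijection. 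Uniqueness of $J^{\wt\bG}_{\wt s}$ is immediate once (1) and (2) are imposed.

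For (3), I would apply $\sigma$ term by term to the defining formula. The standard equivariance of Deligne--Lusztig induction, ${}^\sigma R^{\wt\bG}_{\wt\bT_w}(\theta)=R^{\wt\bG}_{\sigma^{-1}(\wt\bT_w)}(\theta\circ\sigma)$, dualizes through the parametrization $(\wt\bT_w,\theta)\leftrightarrow(\wt\bT^*_w,\wt s)$ into the identity
\begin{equation*}
{}^\sigma R^{\bG^*}_{\wt\bT^*_w}(\wt s)=R^{\bG^*}_{\wt\bT^*_{\sigma^\sharp w}}(\sigma^*(\wt s)),
\end{equation*}
where $\sigma^\sharp$ denotes the isomorphism $W(\wt s)\xrightarrow{\sim} W(\sigma^*(\wt s))$ induced by $\sigma^*$. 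Reindexing the sum by $w\mapsto \sigma^\sharp w$, and noting that $\sigma^*$ preserves $\mathbb{F}_q$-ranks so that the signs $\epsilon_{\wt\bG}$ and $\epsilon_{C_{\wt\bG^*}(\wt s)}$ are unaffected, recasts the defining formula as
\begin{equation*}
{}^\sigma R^{\wt\bG}_\rho[\wt s]=\epsilon_{\wt\bG}\epsilon_{C_{\wt\bG^*}(\wt s)}\frac{1}{|W(\sigma^*(\wt s))|}\sum_{w\in W(\sigma^*(\wt s))}\hat\rho\bigl((\sigma^\sharp)^{-1}wF^*\bigr)R^{\bG^*}_{\wt\bT^*_w}(\sigma^*(\wt s)).
\end{equation*}

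The main obstacle will be the identification $\hat\rho\bigl((\sigma^\sharp)^{-1}wF^*\bigr)=\widehat{{}^{\sigma^*}\!\rho}(wF^*)$, equivalently the claim that the formation of the preferred extension intertwines with the $\sigma^*$-action on $\irr{W(\wt s)}$. I plan to extract this from the uniqueness characterization recalled above: since $\sigma^*$ sends unipotent characters of $C_{\bG^*}(\wt s)$ to unipotent characters of $C_{\bG^*}(\sigma^*(\wt s))$, transporting by $\sigma^*$ the defining property of $\hat\rho$ shows that ${}^{\sigma^*}\!\hat\rho$ satisfies the defining property of $\widehat{{}^{\sigma^*}\!\rho}$, and uniqueness forces the equality. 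Substituting back yields ${}^\sigma R^{\wt\bG}_\rho[\wt s]=R^{\wt\bG}_{{}^{(\sigma^*)^{-1}}\rho}[(\sigma^*)^{-1}(\wt s)]$, which after the substitution $\rho\mapsto{}^{\sigma^*}\!\rho$, $\wt s\mapsto\sigma^*(\wt s)$ is precisely the identity claimed in (3).
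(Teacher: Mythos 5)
Your handling of parts (1), (2), and of uniqueness coincides with the paper's: both proofs invoke \cite[Thm.~3.2]{LS77} for (1) and \cite[Thm.~2.2]{LS77} for (2), and both observe that once these are imposed $J^{\wt\bG}_{\wt s}$ is determined. For (3) you take a genuinely different route: the paper simply defers to \cite[Prop.~8.5]{CS}, whereas you sketch a direct proof by applying $\sigma$ to the Lusztig--Srinivasan sum, using equivariance of Deligne--Lusztig induction, reindexing over the Weyl group, and transporting the uniqueness characterisation of the preferred extension along $\sigma^*$ to deduce $\widehat{{}^{\sigma^*}\rho}={}^{\sigma^*}\hat\rho$. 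That strategy is sound, and isolating the preferred-extension equivariance as the genuinely non-formal point is exactly the right instinct; the expansion buys transparency at the cost of bookkeeping.

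That bookkeeping, however, does not close as written. From your displayed intermediate formula --- where both the indexing Weyl group and the Deligne--Lusztig characters carry the label $\sigma^*(\wt s)$ --- the sum reassembles to $R^{\wt\bG}_{{}^{\sigma^*}\rho}[\sigma^*(\wt s)]$ with ${}^{\sigma^*}\rho=\rho\circ(\sigma^\sharp)^{-1}$. Yet your next sentence asserts ${}^\sigma R^{\wt\bG}_\rho[\wt s]=R^{\wt\bG}_{{}^{(\sigma^*)^{-1}}\rho}[(\sigma^*)^{-1}(\wt s)]$, which is incompatible with the display you derived. This is not cosmetic: the theorem (and the equivariant Jordan decomposition used throughout the paper, ${}^\sigma\chi_{\wt s,\lambda}=\chi_{(\sigma^*)^{-1}(\wt s),\sigma^*(\lambda)}$) has the semisimple label moving by $(\sigma^*)^{-1}$ while the Weyl-group character moves by $\sigma^*$, and the direction you choose in ${}^\sigma R^{\bG^*}_{\wt\bT^*_w}(\wt s)=R^{\bG^*}_{\wt\bT^*_{\sigma^\sharp w}}(\,\cdot\,)$ is precisely where that asymmetry must be introduced. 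You need to fix the duality convention for $(\sigma,\sigma^*)$ in the sense of \cite[Def.~2.1]{CS13}, redo the Deligne--Lusztig equivariance step with the correct direction (so that $(\sigma^*)^{-1}(\wt s)$, not $\sigma^*(\wt s)$, appears), and only then does the reindexing and the substitution $\rho\mapsto{}^{\sigma^*}\rho$, $\wt s\mapsto\sigma^*(\wt s)$ land on statement (3).
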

\begin{proof}
(1) is \cite[Thm 3.2]{LS77}. This implies that the set of virtual characters $\{ R^{\wt \bG}_{\wt \bT^*}(\wt s): \wt \bT^* \leq C_{\wt \bG^*}(\wt s)~~{\rm{an~F-stable~maximal~torus}} \}$ spans the same subspace of the space of class functions as in $\mathcal{E}(\wt G, \wt s)$. Hence, there can be only one bijection satisfying (1) and so (2) holds by \cite[Thm 2.2]{LS77}. Finally (3) is shown in \cite[Prop. 8.5]{CS}.
\end{proof}

We now consider a character $\chi_{\wt s, \lambda}$ where $\lambda \in \mathcal{E}(C_{\wt G^*}(\wt s), 1)$ is a unipotent character. By the above theorem, we have $\lambda=R^{{C_{\bG^*}(\wt s)}}_\rho$ for some $\rho \in \irr {{W(\wt s)}}$ and also $\chi_{\wt s, \lambda}=R^{\wt \bG}_\rho[\wt s]$. The restriction of the character on the right hand side is
\[R^{ \bG}_\rho[ s]=\epsilon_{\bG}\epsilon_{C_{\bG^*}( s)} \frac{1}{|W^\circ(s)|} \sum_{w \in W^\circ(s)} \hat{\rho}(wF^*) R^{\bG^*}_{{ \bT_w^*}}(s),  \]
where $s \in G$ is the image of $\wt s$ under the surjective map $\wt \bG^* \rightarrow \bG^*$ dual to the regular embedding $\bG \rightarrow \wt \bG$. Fixing a unipotent element $u_1 \in {\mathcal{O}^*_{\wt s, \lambda}}^F$ in the wave front set of the class, we write $\chi_{s, \lambda, 1}=R^\bG_\rho[s]_1$ for the common irreducible constituent of the GGGC $\Gamma_{u_1}$ and $\res^{\wt G}_G (\chi_{\wt s, \lambda})=R^\bG_\rho[s]$.

It is well known that we have an isomorphism $A_{\bG^*}(s) \cong W(s)/W^\circ(s) $ that is $F^*$ -equivariant. The group $W(s)^{F^*}/{W^\circ(s)}^{F^*}$ also acts on $\irr {W(s)}^{F^*}$. The stabilizer $A_{G^*}(s)_\lambda \leq A_{G^*}(s)$ of $\lambda$ is mapped isomorphically to the stabilizer of $\rho$ in $W(s)^{F^*}/{W^\circ(s)}^{F^*}$. So we can write $A_{G^*}(s)_\rho$ for $A_{G^*}(s)_\lambda$.

Next, for any $\xi \in \irr {A_{G^*}(s)_\lambda}=\irr {A_{G^*}(s)_\rho}$, we define the character $\chi_{s, \lambda, \xi}={R^\bG_\rho[s]}_\xi$ by applying a coresponding diagonal automorphism to the chosen constituent. Note that 
\[{}^{\sigma^*} \lambda={}^{\sigma^*} R^{C_{\wt \bG^*}(\wt s)}_\rho=R^{{C_{\wt \bG^*}(\sigma^*(\wt s))}}_{{}^{\sigma^*}\rho}\]
 which follows from the same argument as above but this time we are working in $C_{\wt \bG^*}(\wt s)$ not its dual group, so there is no need to transpose the action of the automorphism, see \cite[Corollary 1.50]{Tay12}.
 
 Finally, \cite[Lem 8.9]{CS} yields that
 \[\chi_{s, \lambda, \xi}={R^\bG_\rho[s]}_\xi={}^\sigma R^{\wt \bG}_{{}^{\sigma^*}\rho}[{\sigma^*}(\wt s)]_{{}^{\sigma^*}\xi} ={}^\sigma \chi_{\sigma^*(s), \sigma^*(\lambda), \sigma^*(\xi)}, \]
which is the Main Theorem's result.

\section*{Acknowledgment}
This work was supported by the Grant No. 98012009 from the Iranian National Science Foundation (INSF). The paper was initiated in a conversation with Olivier Brunat at IMJ-PRG. We would like to thank him for pointing out this approach. The authors are deeply indebted to the anonymous colleague who suggested the materials of Section \ref{eq-Sec} and kindly allowed to include it in the paper. The first author would also like to thank Marc Cabanes, Frank Himstedt, Farideh Shafiei, and Jay Taylor for their helpful communication and comments over the course of working on this paper.


\begin{thebibliography}{apacite}
\bibitem[Bo00]{Bo00}
C. Bonnaf{\' e}, Op{\' e}rateur de torsion dans $\SL_n(q)$ et $\SU_n(q)$, Bull.
Soc. Math. France 128 (2000) no. 3, 309--345.

\bibitem[Bo06]{bo}
C. Bonnaf{\' e}, Sur les caract{\' e}res des groupes r{\' e}ductifs  finis a centre non
connexe: applications aux groupes sp{\' e}ciaux lin{\' e}aires et unitaires. Ast{\' e}risque 306  (2006).

\bibitem[Br12]{bru}
O. Brunat, On semisimple classes and semisimple characters in finite reductive groups, Ann. Inst. Fourier (Grenoble) 62 (2012) 1671--1716.

\bibitem[BH11]{bh}
O. Brunat, F. Himstedt, On equivariant bijection relative to the defining characteristic. J. Algebra 334 (2011) 150--174.

\bibitem[BTZ17]{BTZ}
C. Bessenrodt, H. P. Tong-Viet, Z. Zhang, Huppert’s conjecture
for alternating groups, J. Algebra 470 (2017) 353--378.

\bibitem[CE04]{CE}
M. Cabanes, M. Enguehard, Representation theory of finite reductive groups,
New Mathematical Monographs, vol. 1, Cambridge University Press, Cambridge, 2004.

\bibitem[CS13]{CS13}
M. Cabanes, B. Sp{\" a}th, Equivariance and extendibility in finite reductive groups with connected center. Math. Z. 275 (2013) 689--713.

\bibitem[CS17]{CS}
M. Cabanes, B. Sp{\" a}th, Equivariant character correspondences and inductive McKay condition for type A. J. Reine Angew. Math. 728 (2017) 153--194.

\bibitem[DLM92]{dlm}
F. Digne, G. I. Lehrer, J. Michel, The characters of the group of rational points of a reductive group with non-connected centre. J. Reine Angew. Math. 425 (1992) 155--192.

\bibitem[DM91]{dm91}
F. Digne, J. Michel, Representations of finite groups of Lie type. London Mathematical
Society Student Texts, vol. 21, Cambridge University Press, Cambridge,
1991.

\bibitem[Ge93]{Ge93}
M. Geck, Basic sets of Brauer characters of finite groups of Lie type II. J. Lond. Math.
Soc. 47 (1993) 255--268.

\bibitem[Ge96]{Ge}
M. Geck, On the average values of the irreducible characters of finite groups of Lie type
on geometric unipotent classes. Doc. Math. 1 (1996) 293--317.

\bibitem[Ge03]{Gebook}
M. Geck, An introduction to algebraic geometry and algebraic groups, Oxford University Press, Oxford, 2003.

\bibitem[GLS]{GLS}
D. Gorenstein, R. Lyons, R. Solomon, The classification of finite simple groups. Number 3, Part I. Chapter A. American Mathematical Society, 1998.

\bibitem[Hu00]{Hu00}
B. Huppert, Some simple groups which are determined by the set of their character degrees I. Illinois J. Math. 44 (4) (2000) 828-- 842. 

\bibitem[Ka85]{ka}
N. Kawanaka, Generalized Gelfand-Graev representations and Ennola duality. In: Algebraic
groups and related topics (Kyoto/Nagoya, 1983). Vol. 6. Adv. Stud. Pure Math. Amsterdam: North-Holland, 1985, 175--206.

\bibitem[Le78]{Leh}
G. I. Lehrer, On the value of characters of semisimple groups over finite fields, Osaka J. Math. 15 (1) (1978) 77--99.


\bibitem[Lu88]{lu88}
G. Lusztig, On the representations of reductive groups with disconnected centre. In: Orbites unipotentes et repr{\' e}sentations I, Ast{\' e}risque 168 (1988).

\bibitem[LS77]{LS77}
G. Lusztig, B. Srinivansan. The characters of the finite unitary groups, J. Algebra 49, 1977, p. 167--171.

\bibitem[MK19]{MK}
G. Malle, R.  Kessar, Local-global conjectures and blocks of finite simple groups. In: Groups St Andrews 2017 in Birmingham. Cambridge Univ. Press, Cambridge (2019) 70--105.

\bibitem[Sp12]{sp12}
B. Sp{\" a}th, Inductive McKay condition in defining characteristic. Bull. Lond. Math. Soc. 44 (3)(2012) 426--438.

\bibitem[Ta12]{Tay12}
J. Taylor, On unipotent supports of reductive groups with a disconnected centre, PhD Thesis, 2012.

\bibitem[Ta16]{Tay16}
J. Taylor, Generalised Gelfand-Graev representations in small characteristics. Nagoya
Math. J. 224 (2016) 93--167.

\bibitem[Ta18]{Tay18}
J. Taylor, Action of automorphisms on irreducible characters of symplectic group. J. Algebra 505 (2018) 211--246.
\end{thebibliography}
\end{document}